\newcommand*{\sectionbookmark}[1][]{%
  \bookmark[%
    level=section,%
    dest=\@currentHref,%
    #1%
  ]%
}
 \newtheorem{thm}{Theorem}[section]
 \newtheorem{cor}[thm]{Corollary}
 \newtheorem{lem}[thm]{Lemma}
 \newtheorem{prop}[thm]{Proposition}
 \theoremstyle{definition}
 \newtheorem{defn}[thm]{Definition}
 \theoremstyle{remark}
 \newtheorem{rem}[thm]{Remark}
 \newtheorem{proofpart}{Step}[thm]
 \numberwithin{equation}{section}
\newcommand{\N}{\mathbb{N}}
\newcommand{\R}{\mathbb{R}}
\newcommand{\Om}{\Omega}
\newcommand{\ric}{\mathrm{Ric}}
\newcommand{\n}{\mathbf{n}}
\newcommand{\dist}{\mathrm{dist}}
\newcommand{\vol}{\mathrm{Vol_g}}
\newcommand{\diam}{\mathrm{diam}}
\let\emptyset\varnothing
\begin{document}

%
%
%
%
%
%
%
%
%

\title[Eigenvalues of the Wentzel-Laplace operator]{
\begin{center}
 Asymptotically sharp bound for Wentzel-Laplace eigenvalues
\end{center}}

\author[A. Ndiaye]{
\begin{center}
A\"issatou M. NDIAYE
\end{center}}

\address{%
 Institut de math\'ematiques\\
			Université de Neuchâtel\\
			   Switzerland \\
		     Tel.: +41327182800\\
         }
           
 \email{aissatou.ndiaye@unine.ch}

%

\date{\today}

\begin{abstract}
We prove  asymptotically optimal upper bounds  for the eigenvalues of the Wentzel–Laplace operator on Riemannian manifolds with Ricci curvature bounded below. These bounds depend
 highly on the geometry of the boundary in addition to the dimension and the volume of the manifold.
\end{abstract}

\maketitle

\section{Introduction}
Let $n\geqslant 2$ and $(M,g)$ be a complete Riemannian manifold of dimension $n$. Let $\Omega\subset M$ be a bounded domain with smooth boundary $\Gamma$. 
We denote by $\Delta$ and  $\Delta_\Gamma$ the Laplace-Beltrami operators  acting on functions  on $\Omega$ and $\Gamma$ respectively. Given  an arbitrary
constant $\beta\in\R_{\geqslant 0}$, consider the following eigenvalue problem on $\Omega$:
\begin{equation}\label{w}
\begin{cases}
\Delta u=0 \quad \text{in}~\Omega,\\
\beta \Delta_\Gamma u +\partial_\n u=\lambda u  \quad \text{on}~\Gamma.\\
\end{cases}\quad\text{(Wentzel Problem)},
\end{equation}
where $\partial_\n$ denotes the outward unit normal derivative.

The spectrum of the Laplacian with Wentzel boundary condition consists in an increasing countable sequence of eigenvalues
 \begin{equation}\label{spectrum}
 0=\lambda_{W,0}^{\beta}< \lambda_{W,1}^{\beta}\leqslant\lambda_{W,2}^{\beta}\leqslant \cdots\leqslant \lambda_{W,k}^{\beta}\leqslant \cdots \nearrow \infty.
 \end{equation}
 We adopt the convention that each eigenvalue is repeated according to its multiplicity.
 Let $\mathfrak{V}(k) $ denote  the set of  all $ k$-dimensional  subspaces of   $ \mathfrak{V}_\beta$ which  is defined by
\begin{equation}
 \mathfrak{V}_\beta \stackrel{\scriptscriptstyle\text{def}}=\{(u,u_\Gamma)\in H^1(\Omega)\times H^1(\Gamma): u_\Gamma=u|_\Gamma \}.
\end{equation}
For every $k\in\N$, the $k$th eigenvalue of the Wentzel-Laplace operator $B_\beta$ satisfies

\begin{equation}\label{char}
 \lambda_{W,k}^{\beta}(\Omega)={\underset{V\in \mathfrak{V}(k)}{\min}  }\underset {0\neq u\in V} {\max} R_\beta(u),
\end{equation}
where $ R_\beta(u) $,  the Rayleigh quotient for $\mathrm{B}_\beta$, is given by 
\begin{equation}\label{rayleigh}
 R_\beta(u) \stackrel{\scriptscriptstyle\text{def}}=\frac{\int_\Om{|\nabla u|^2 \mathrm{d}_M+\beta\int_{\Gamma}{|\nabla_\Gamma u|^2 \mathrm{d}_\Gamma}}}{\int_{\Gamma}{u^2 \mathrm{d}_\Gamma}}, \quad \text{for all } u\in \mathfrak{V}_\beta\backslash\{0\}.
\end{equation}
We obtain a relevant upper bound for the  eigenvalues of the problem \eqref{w}, according to the Weyl law:
 \begin{equation}
\label{WeylW}
\lambda_{W,k}^{\beta}(M,g)=\beta C_n^2k^{\frac{2}{n-1}}+O(k^{\frac{2}{n-1}}),\quad k\rightarrow\infty,
\end{equation}
where $ C_n=\frac{2\pi}{\left( \omega_{n-1}\vol(\Gamma) \right)^{\frac{1}{n-1}}}$.
\begin{thm}\label{main29}
Let $n\geqslant 2$ and $(M, g)$ be a complete $n$-dimensional Riemannian manifold with $\ric (M,g) \geqslant -(n-1)\kappa^2$, with $\kappa\in \R_{\geqslant 0}$. Let $\Omega\subset M$ be a
domain with smooth boundary $\Gamma$. Then for
every $k\geqslant 1$, one has
%
%

\begin{align}\label{eq09042020a}
\lambda_{W,k}^{\beta}(\Om)
&\leqslant
A(n)\left[\kappa\frac{\vol(\Omega)}{\vol(\Gamma)}+\beta\right]\left( \frac{k}{\vol(\Gamma)}  \right)^{\frac{2}{n-1}}
\nonumber \\
&+{B}(n,\kappa,C_0)
  \left[ \left( \frac{\vol(\Om)}{\vol(\Gamma)}\right)^{1-\frac{2}{n}} +\frac{\vol(\Om)}{\vol(\Gamma)}+\beta \right] \left( \frac{k}{\vol(\Gamma)}  \right)^{\frac{2}{n}} 
\nonumber\\
&+{C}(n,\kappa,R_0) \left[ \frac{\vol(\Om)}{\vol(\Gamma)}+\beta \right],
\end{align}
where the constant $A(n)$ depends only on the dimension $n$,  $B(\Omega,\kappa, C_0)$ and $C(\Omega,\kappa,R_0)$ depend in addition on $\kappa$ and other geometric constants $C_0$ and $R_0$.
\end{thm}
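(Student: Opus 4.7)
The plan is to invoke the variational characterization \eqref{char} and exhibit a $k$-dimensional family of test functions $(u_i, u_i|_\Gamma)\in\mathfrak{V}_\beta$ whose boundary traces have pairwise disjoint supports. Disjointness forces linear independence on $\Gamma$ and reduces the min-max to the uniform bound $\lambda_{W,k}^\beta(\Om)\leqslant \max_i R_\beta(u_i)$, so the task is to construct test functions whose Rayleigh quotient is controlled by the right-hand side of \eqref{eq09042020a}.

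The first step is a metric decomposition of the boundary $(\Gamma,g|_\Gamma)$ in the spirit of Grigor'yan-Netrusov-Yau (or Colbois-Maerten): I would produce $k$ pairwise disjoint pairs of concentric geodesic balls $(A_i,2A_i)$ in $\Gamma$ with $\vol(A_i)\geqslant c_n\vol(\Gamma)/k$ and inner radius $r$ comparable to $(\vol(\Gamma)/k)^{1/(n-1)}$. On each annulus take the standard Lipschitz plateau $\phi_i$ with $\phi_i=1$ on $A_i$, vanishing outside $2A_i$, and $|\nabla_\Gamma\phi_i|\leqslant 1/r$. To lift $\phi_i$ to an element of $\mathfrak{V}_\beta$, I would extend it through a tubular neighborhood of $\Gamma$ of thickness $h$ (to be optimized), setting $u_i(x)=\phi_i(\pi(x))\,\chi(d(x,\Gamma)/h)$ where $\pi$ is the foot of the minimizing geodesic from $x$ to $\Gamma$ and $\chi$ is a fixed smooth cutoff with $\chi(0)=1$, $\chi(1)=0$.

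The core estimate then uses the Ricci bound $\ric\geqslant -(n-1)\kappa^2$ via volume and Jacobi comparison in Fermi coordinates around $\Gamma$. Decomposing $\nabla u_i$ into tangential and normal parts yields, schematically,
\[
\int_\Om |\nabla u_i|^2 \mathrm{d}_M \leqslant h\bigl(1+O(\kappa h)\bigr)\int_\Gamma |\nabla_\Gamma \phi_i|^2 \mathrm{d}_\Gamma + h^{-1}\bigl(1+O(\kappa h)\bigr)\int_\Gamma \phi_i^2 \mathrm{d}_\Gamma,
\]
together with $\int_\Gamma |\nabla_\Gamma u_i|^2 \mathrm{d}_\Gamma \leqslant r^{-2}\vol(2A_i)$ and $\int_\Gamma u_i^2 \mathrm{d}_\Gamma\geqslant \vol(A_i)$. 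Plugging into $R_\beta$ and balancing $h \sim \vol(\Om)/\vol(\Gamma)$, so that the tube's volume matches $\vol(\Om)$ up to a factor depending on $\kappa$, reproduces the three terms on the right-hand side of \eqref{eq09042020a}: the boundary gradient gives the leading Weyl contribution $[\beta+\kappa\vol(\Om)/\vol(\Gamma)](k/\vol(\Gamma))^{2/(n-1)}$, while the interior contributions feed the subcritical $(k/\vol(\Gamma))^{2/n}$ and the constant terms.

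The main obstacle will be the capacitor construction itself: the doubling/covering inequalities required to run Grigor'yan-Netrusov-Yau on $\Gamma$ are not implied by the ambient Ricci bound on $M$, and must be sourced from the auxiliary geometric data $C_0$ and $R_0$ (an isoperimetric-type constant and a lower bound on the reach or injectivity radius of $\Gamma$). Calibrating the packing radius $r$ against the tubular thickness $h$ to preserve both disjointness of the traces and $H^1$-regularity of $u_i$ across the projection $\pi$, while simultaneously matching the sharp $\beta$-coefficient in the leading term, is the delicate point; the secondary terms with coefficients $B(n,\kappa,C_0)$ and $C(n,\kappa,R_0)$ arise precisely from the regimes where $r$ or $h$ saturate these geometric thresholds.
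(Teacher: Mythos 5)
Your overall strategy (min--max plus $k$ disjointly supported test functions with controlled Rayleigh quotients) is the right one, but the implementation has gaps that the paper's proof is specifically designed to avoid. The paper does \emph{not} decompose $(\Gamma,g|_\Gamma)$ intrinsically: it works in the ambient metric measure space $(M,d,\mu)$ with $\mu(A)=\vol(A\cap\Gamma)$, where the $(N,4;1)$-covering property follows from Bishop--Gromov under the hypothesis $\ric(M,g)\geqslant-(n-1)\kappa^2$ alone. The test functions are radial plateau functions on ambient balls $B(x_j,2r_j)$ (or on $\tilde r_0$-neighbourhoods of Colbois--Maerten sets), so no tubular-neighbourhood extension, nearest-point projection, or Fermi-coordinate comparison is needed; the only place the extra geometry of $\Gamma$ enters is Nardulli's Lemma \ref{Lnard}, which gives the upper bound $\vol(\Gamma\cap B(x,R))\leqslant(1+2RC_0)\,\omega_{n-1}(2R)^{n-1}$ used to control $\int_\Gamma|\nabla f_j|^2$ via H\"older (the dichotomy $4r_jC_0\lessgtr 1$ is precisely what produces both the $(k/\vol(\Gamma))^{2/(n-1)}$ and the $(k/\vol(\Gamma))^{2/n}$ contributions). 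Your route instead needs (i) a Grigor'yan--Netrusov--Yau decomposition of $\Gamma$ itself, which requires packing/doubling for the induced metric on $\Gamma$ that neither the ambient Ricci bound nor the constants $C_0,R_0$ (which bound $\vol(\Gamma\cap B)$ from \emph{above}) supply, together with a lower volume bound $\vol(A_i)\gtrsim r^{n-1}$ to make the inner radius comparable to $(\vol(\Gamma)/k)^{1/(n-1)}$; and (ii) a uniform one-sided tube of thickness $h$ with Jacobi-field control, i.e.\ uniform reach and second-fundamental-form bounds, which are not hypotheses of the theorem. You flag (i) as the main obstacle but do not resolve it, and resolving it is the content of the proof.

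There is also a quantitative problem with your balancing. The normal-derivative part of your extension contributes roughly $h^{-2}\vol(\text{tube over }2A_i)\approx h^{-1}\vol(2A_i)$ to the numerator, hence about $h^{-1}$ to $R_\beta(u_i)$ after dividing by $\vol(A_i)$ (and even this needs $\vol(2A_i)\lesssim\vol(A_i)$, another doubling statement). With $h\sim\vol(\Om)/\vol(\Gamma)$ this term is $\sim\vol(\Gamma)/\vol(\Om)$, the \emph{reciprocal} of the constant term $C(n,\kappa,R_0)\left[\vol(\Om)/\vol(\Gamma)+\beta\right]$ you are trying to reach, and it blows up when $\vol(\Om)$ is small; moreover $h$ cannot exceed the reach of $\Gamma$, so the balancing is not admissible in general. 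The paper obtains the correct constant term differently: the neighbourhood radius is the fixed constant $r_0=\frac{1}{10}\min\{1,R_0\}$, and the factor $\vol(\Om)/\vol(\Gamma)$ comes from the pigeonhole bound $\vol(\Om\cap A_j^{\tilde r_0})\leqslant\vol(\Om)/k$ over the $4k$ disjoint supports combined with the lower bound $\mu(A_j)\geqslant\vol(\Gamma)/(8Nk)$ on the denominator.
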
 
As an immediate corollary of this result, we have the following upper bound.
\begin{cor}\label{cor07042020}
Under the assumptions of Theorem \ref{main29}, we have
$$
\lambda_{W,k}^{\beta}(\Om)
\leqslant
  \left( A(n)\left[\kappa\frac{\vol(\Omega)}{\vol(\Gamma)}+\beta\right]+1\right)  \left( \frac{k}{\vol(\Gamma)}  \right)^{\frac{2}{n-1}}  
+{B}(\Omega,\beta),
$$
where the constant $A(n)$ depends only on the dimension $n$  and ${B}(\Omega,\beta)$ depends on geometric quantities of $\Omega$ and on $\beta$.
\end{cor}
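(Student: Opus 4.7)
The plan is straightforward: apply Theorem \ref{main29} directly, and then absorb the intermediate term $\left(k/\vol(\Gamma)\right)^{2/n}$ into the leading term $\left(k/\vol(\Gamma)\right)^{2/(n-1)}$ at the cost of an additive, $k$-independent constant. The natural tool is Young's inequality with conjugate exponents $p=n$ and $q=n/(n-1)$, chosen precisely so that the exponent on the $k$-factor on the right-hand side becomes $2/(n-1)$.

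Concretely, I would set
$$
M_\Om \;:=\; B(n,\kappa,C_0)\left[\left(\tfrac{\vol(\Om)}{\vol(\Gamma)}\right)^{1-\tfrac{2}{n}}+\tfrac{\vol(\Om)}{\vol(\Gamma)}+\beta\right],
$$
so that the middle term on the right-hand side of \eqref{eq09042020a} equals $M_\Om \cdot \left(k/\vol(\Gamma)\right)^{2/n}$. Young's inequality then yields
$$
M_\Om \cdot \left(\frac{k}{\vol(\Gamma)}\right)^{2/n} \;\leqslant\; \frac{M_\Om^{\,n}}{n} + \frac{n-1}{n}\left(\frac{k}{\vol(\Gamma)}\right)^{2/(n-1)}.
$$
Since $(n-1)/n \leqslant 1$, combining this with the leading term of \eqref{eq09042020a} produces the coefficient $A(n)\left[\kappa\vol(\Om)/\vol(\Gamma)+\beta\right]+1$ in front of $\left(k/\vol(\Gamma)\right)^{2/(n-1)}$, exactly as claimed. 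The $k$-free leftovers, namely $M_\Om^{\,n}/n$ together with the third term $C(n,\kappa,R_0)\left[\vol(\Om)/\vol(\Gamma)+\beta\right]$ of \eqref{eq09042020a}, combine into a single constant depending only on $\Om$ and $\beta$, which I would relabel $B(\Om,\beta)$.

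The only subtle point, and the reason Young is preferable to the cruder inequality $x^{2/n}\leqslant x^{2/(n-1)}+1$ (valid for $x>0$), is that Young decouples the $\Om$-dependence carried by $M_\Om$ from the $k^{2/(n-1)}$ factor, so that the final leading coefficient depends only on the geometric quantity $\kappa\vol(\Om)/\vol(\Gamma)+\beta$ advertised in the statement, rather than on all the auxiliary constants hidden inside $M_\Om$. Beyond correctly choosing the conjugate exponents, I anticipate no real obstacle; the corollary is genuinely a rearrangement of \eqref{eq09042020a}.
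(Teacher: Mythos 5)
Your proposal is correct and follows the same strategy as the paper: both absorb the intermediate term $\left(k/\vol(\Gamma)\right)^{2/n}$ of \eqref{eq09042020a} into the leading term $\left(k/\vol(\Gamma)\right)^{2/(n-1)}$ with coefficient at most $1$ plus a $k$-independent remainder, leaving the third term untouched and collecting all $k$-free quantities into $B(\Omega,\beta)$. The only difference is mechanical: where you invoke Young's inequality with conjugate exponents $n$ and $n/(n-1)$, the paper (in the proof of Corollary \ref{cor070420202}, to which it refers) splits into the cases $k\leqslant \overline{B}^{\frac{n(n-1)}{2}}$ and $k>\overline{B}^{\frac{n(n-1)}{2}}$, bounding the middle term by a geometric constant in the first case and by $\left(k/\vol(\Gamma)\right)^{2/(n-1)}$ with coefficient $1$ in the second, which achieves exactly the same decoupling of the $\Omega$-dependence from the leading coefficient.
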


\begin{rem}
It is important to remark that the hypotheses of Theorem \ref{main29}  are quite strong. They make part of unexpressed geometric properties  that are involved in Lemma \ref{Lnard} hidden behind the constants $R_0$ and $C_0$. 
Those required properties place  geometric constraints  on both $\Gamma$ and the ambient Riemannian manifold $M$ including a bounds on the second fundamental form and on the Ricci curvature of $\Gamma$.
However, we do not think that one can allow much weaker assumptions. Nonetheless, optimality of these assumptions  deserves to be discussed.
It would be interesting to compare them with alternative assumptions on the sectional curvature in $M$  and principal curvatures  of $\Gamma$.
\end{rem}
%

A general important assumption substantiated by the use of lemma \ref{Lnard} in the proof of Theorem \ref{main29} is roughly the required  control of "volume concentration" of $\Gamma$.
This might be comprehended trough the more theoretical result given in Theorem \ref{main21032020} below. 
A general sufficient assumption underlying this volume control is expressed in the following definition.
\begin{defn}\label{def28032020}
Let $n\geq 2$ and $\tilde{C}$ be a positive real number.
We designate by $\mathcal{M}(n,\tilde{C})$ the class of all n-dimensional  Riemannian manifolds  with boundary $\Gamma$ such that, for all $x \in\Gamma$ and all radius $0<r <1$, we have
$$\vol(B(x, r) \cap \Gamma) \leqslant \tilde{C}r^{n-1},$$
where $B(x, r)$ denotes the $n$-dimensional metric ball of center $x$ and radius $r > 0$.
\end{defn}
 This allows us to ceil the absorbed boundary volume by $n$-dimensional metric balls. The eigenvalues of manifolds in $\mathcal{M}(n,\tilde{C})$ are uniformly controlled. We establish the following very general result:
\begin{thm}\label{main21032020}
Let $n\geqslant 2$ and $\tilde{C}$ be a positive real number. Let $(M, g)$ be a complete $n$-dimensional Riemannian manifold with $\ric (M,g) \geqslant -(n-1)\kappa^2$, with $\kappa\in \R_{\geqslant 0}$. Let $\Omega\subset M$ be a domain with smooth boundary $\Gamma$ such that $\Omega\in \mathcal{M}(n,\tilde{C})$ . Then for
every $k\geqslant 1$, one has
\begin{align}\label{eq09042020b}
\lambda_{W,k}^{\beta}(\Om)
&\leqslant
A(n)\left[\kappa\frac{\vol(\Omega)}{\vol(\Gamma)}+\beta\right]\left( \frac{\tilde{C}k}{\vol(\Gamma)}  \right)^{\frac{2}{n-1}}
\nonumber \\
&+B(n,\kappa)
 \left( \frac{\vol(\Om)}{\vol(\Gamma)}  \right)^{1-\frac{2}{n}}
 \left( \frac{k}{\vol(\Gamma)}  \right)^{\frac{2}{n}} 
\nonumber\\
&+C(n,\kappa) \left(  \frac{\vol(\Om)}{\vol(\Gamma)}+\beta \right),
\end{align}
where  the constant $A(n)$ depends only on the dimension,   $B(n,\kappa)$ and $C(n,\kappa)$ depend only on the dimension $n$ and $\kappa$. 

\end{thm}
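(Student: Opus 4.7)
The plan is to exploit the variational characterization \eqref{char}: for each $k\geq 1$, I will exhibit a $(k+1)$-dimensional subspace of $\mathfrak{V}_\beta$ on which $R_\beta$ is bounded by the right-hand side of \eqref{eq09042020b}. To reduce the min–max to a pointwise estimate, I will construct $k+1$ test functions whose traces on $\Gamma$ have pairwise disjoint supports (hence are automatically linearly independent in $\mathfrak{V}_\beta$), and I will need only control $R_\beta(u_i)$ uniformly in $i$.

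The key step is a Grigor'yan–Netrusov–Yau type metric covering lemma on $(M,g)$. The Ricci lower bound, through Bishop–Gromov, gives the local volume doubling needed for such a packing at scales comparable to $1/\kappa$. I would apply it to a weighted capacity combining $\vol|_\Omega$ and $\vol|_\Gamma$ in a way that mirrors the numerator and denominator of $R_\beta$. The output is a family of pairwise disjoint annuli $A_i=B(x_i,2r_i)\setminus B(x_i,r_i)$ with centres $x_i\in\Gamma$, each inner ball $B_i=B(x_i,r_i)$ carrying boundary mass $\vol(B_i\cap\Gamma)\gtrsim \vol(\Gamma)/k$. The covering naturally splits into two regimes: a boundary-dominated regime in which $r_i$ can be chosen of order $(\vol(\Gamma)/(\tilde Ck))^{1/(n-1)}$ (using Definition \ref{def28032020} to bound $\vol(B(x_i,2r_i)\cap\Gamma)$ by $2^{n-1}\tilde C r_i^{n-1}$), and a bulk-dominated regime in which the appropriate scale is $(\vol(\Omega)/k)^{1/n}$ (controlled directly by Bishop–Gromov in $M$).

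For each $A_i$ I would take the standard Lipschitz hat $u_i$ equal to $1$ on $B_i$, $0$ outside $B(x_i,2r_i)$, with $|\nabla u_i|\leq 1/r_i$ and $|\nabla_\Gamma u_i|\leq 1/r_i$. Bishop–Gromov gives $\int_\Omega |\nabla u_i|^2\leq r_i^{-2}\vol(B(x_i,2r_i)\cap\Omega)$ and Definition \ref{def28032020} gives $\int_\Gamma |\nabla_\Gamma u_i|^2\leq 2^{n-1}\tilde C r_i^{n-3}$, while the denominator satisfies $\int_\Gamma u_i^2\geq \vol(B_i\cap\Gamma)$. In the boundary-dominated regime, optimising $r_i\sim(\vol(\Gamma)/(\tilde Ck))^{1/(n-1)}$ produces the $\beta(\tilde Ck/\vol(\Gamma))^{2/(n-1)}$ leading term; the $\int_\Omega|\nabla u_i|^2$ contribution, divided by $\vol(B_i\cap\Gamma)$, produces the $\kappa\vol(\Omega)/\vol(\Gamma)$ factor once the $(1+\kappa r_i)^{n-1}$ Bishop–Gromov expansion is used, whose leading polynomial term in $\kappa r_i$ also feeds the $k^{2/(n-1)}$ term. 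The bulk-dominated regime at scale $(\vol(\Omega)/k)^{1/n}$ produces the middle $(\vol(\Omega)/\vol(\Gamma))^{1-2/n}(k/\vol(\Gamma))^{2/n}$ term, and the constant $C(n,\kappa)(\vol(\Omega)/\vol(\Gamma)+\beta)$ absorbs the Bishop–Gromov remainder at large scales together with the contribution from annuli where $r_i\gtrsim 1/\kappa$.

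I expect the principal obstacle to be the covering step: one has to design a packing lemma that respects simultaneously two different natural measures (the bulk measure on $\Omega$ and the $(n{-}1)$-Ahlfors-type boundary measure on $\Gamma$) and selects, annulus by annulus, whichever regime dominates, so that the resulting bound is an additive rather than multiplicative mixture of the two scales. Once the right covering is in place, the Rayleigh quotient estimates and the optimisation in $r_i$ are routine applications of Bishop–Gromov and Definition \ref{def28032020}.
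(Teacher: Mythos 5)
Your overall strategy (min--max with disjointly supported plateau functions, a GNY/Colbois--Maerten type decomposition, Bishop--Gromov for the covering property, and the $\mathcal{M}(n,\tilde C)$ bound for the boundary Dirichlet energy) is the same as the paper's, but the step you yourself flag as the ``principal obstacle'' is a genuine gap, and the way you propose to fill it would not work. You do not need --- and should not look for --- a packing lemma that handles the bulk measure and the boundary measure simultaneously. The paper's resolution is to run the decomposition (Lemma \ref{lem25}, built on Colbois--Maerten) \emph{only} for the single measure $\mu=\vol|_\Gamma$, but to produce $4k$ pairwise disjoint capacitors instead of $k$; since $\sum_j\vol(\Omega\cap B_j)\leqslant\vol(\Omega)$ and $\sum_j\vol(\Gamma\cap B_j)\leqslant\vol(\Gamma)$, a pigeonhole argument leaves at least $k$ capacitors with both $\vol(\Omega\cap B_j)\leqslant\vol(\Omega)/k$ and $\vol(\Gamma\cap B_j)\leqslant\vol(\Gamma)/k$, and these give the middle term via H\"older, $\int_{\Omega\cap B_j}|\nabla f_j|^2\leqslant(r_j^{-n}\vol(B_j))^{2/n}\vol(\Omega\cap B_j)^{1-2/n}$. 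Relatedly, your plan to ``optimise $r_i\sim(\vol(\Gamma)/(\tilde Ck))^{1/(n-1)}$'' presumes you can prescribe the radius and still guarantee $\vol(B_i\cap\Gamma)\gtrsim\vol(\Gamma)/k$ for $k$ disjoint balls, which fails for spread-out boundary measures. In the paper the radii are dictated by the construction, and they simply cancel: $\int_{\Gamma\cap B_j}|\nabla f_j|^2\leqslant(r_j^{-(n-1)}\vol(\Gamma\cap B(x_j,2r_j)))^{2/(n-1)}\vol(\Gamma\cap B_j)^{1-2/(n-1)}\leqslant(2^{n-1}\tilde C)^{2/(n-1)}\vol(\Gamma\cap B_j)^{1-2/(n-1)}$. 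When no scale concentrates mass, the dichotomy instead yields CM-capacitors whose collar width is the \emph{fixed} $\tilde r_0\leqslant 1/10$, and it is this non-optimisable scale --- not a ``Bishop--Gromov remainder at large scales'' --- that produces the $k$-independent term $C(n,\kappa)\left(\vol(\Omega)/\vol(\Gamma)+\beta\right)$.

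A second gap concerns the constant $A(n)$ in the leading term, which the statement requires to be independent of $\kappa$, with the curvature entering only through the explicit factor $\kappa\,\vol(\Omega)/\vol(\Gamma)$. Your direct Bishop--Gromov expansion produces factors of the form $e^{c(n)\kappa r}$ with $r$ of order $1$, i.e.\ constants that are exponential in $\kappa$, and the heuristic that the ``leading polynomial term in $\kappa r_i$'' isolates exactly $\kappa\,\vol(\Omega)/\vol(\Gamma)$ is not substantiated. The paper first proves the estimate with $\kappa$-dependent constants (Proposition \ref{prop020432020}) and then, for $\kappa>1$, rescales the metric to $\tilde g=\kappa^2g$ so that $\ric(M,\tilde g)\geqslant-(n-1)\tilde g$; the identities $\mathrm{Vol}_{\tilde g}(\Omega)/\mathrm{Vol}_{\tilde g}(\Gamma)=\kappa\,\vol(\Omega)/\vol(\Gamma)$ and $\kappa^2\bigl(\tilde Ck/\mathrm{Vol}_{\tilde g}(\Gamma)\bigr)^{2/(n-1)}=\bigl(\tilde Ck/\vol(\Gamma)\bigr)^{2/(n-1)}$ are what turn the exponential dependence into the stated form with $A(n)$ purely dimensional and $B(n,\kappa)$, $C(n,\kappa)$ polynomial in $\kappa$. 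Without this rescaling step your argument would only prove the weaker Proposition-level statement.
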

This general result yields the following corollary.
\begin{cor}\label{cor070420202}
Let the assumptions of Theorem \ref{main21032020} hold.  For
all $k\in \N$, one has
$$
\lambda_{W,k}^{\beta}(\Om)
\leqslant
\left( A(n)\left[\kappa\frac{\vol(\Omega)}{\vol(\Gamma)}+\beta\right]\tilde{C}^{\frac{2}{n-1}}+1\right)\left( \frac{k}{\vol(\Gamma)}  \right)^{\frac{2}{n-1}}+B(\Omega,\beta).
 $$
 where the constant $A(n)$ depends only on the dimension and  $B(\Omega,\beta)$ depends on the geometry ($n$, $\kappa$, $\vol(\Omega)$, $\vol(\Gamma)$) and on $\beta$.
\end{cor}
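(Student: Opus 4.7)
The plan is to derive Corollary \ref{cor070420202} directly from Theorem \ref{main21032020} by absorbing the middle $k^{2/n}$ summand of \eqref{eq09042020b} into the leading $k^{2/(n-1)}$ summand via a single application of Young's inequality, and bundling all $k$-independent pieces into one constant $B(\Omega,\beta)$.

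First I would rewrite the leading term of \eqref{eq09042020b} by pulling out the $\tilde C^{2/(n-1)}$ factor:
\[
A(n)\left[\kappa\frac{\vol(\Omega)}{\vol(\Gamma)}+\beta\right]\left(\frac{\tilde{C}k}{\vol(\Gamma)}\right)^{\frac{2}{n-1}}
= A(n)\left[\kappa\frac{\vol(\Omega)}{\vol(\Gamma)}+\beta\right]\tilde{C}^{\frac{2}{n-1}}\left(\frac{k}{\vol(\Gamma)}\right)^{\frac{2}{n-1}},
\]
which already reproduces the first summand of the target bound.

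The key step is the middle summand. Setting $x = k/\vol(\Gamma)$ and $M = B(n,\kappa)\bigl(\vol(\Om)/\vol(\Gamma)\bigr)^{1-2/n}$, I apply Young's inequality with the conjugate exponents $p = n/(n-1)$ and $q = n$, which are chosen so that $a = x^{2/n}$ and $b = M$ give $a^p = x^{2/(n-1)}$. This yields
\[
M\, x^{2/n} \leqslant \tfrac{n-1}{n}\, x^{2/(n-1)} + \tfrac{1}{n}\, M^n \leqslant x^{2/(n-1)} + M^n.
\]
Substituting back, the middle summand of \eqref{eq09042020b} is dominated by
\[
\left(\frac{k}{\vol(\Gamma)}\right)^{\frac{2}{n-1}} + B(n,\kappa)^n \left(\frac{\vol(\Om)}{\vol(\Gamma)}\right)^{n-2},
\]
and the first piece supplies exactly the $+1$ appearing in the coefficient of $(k/\vol(\Gamma))^{2/(n-1)}$ in the corollary.

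The third summand $C(n,\kappa)(\vol(\Om)/\vol(\Gamma)+\beta)$ is already independent of $k$. Collecting the two $k$-free remainders, I define
\[
B(\Omega,\beta) := B(n,\kappa)^n\left(\frac{\vol(\Om)}{\vol(\Gamma)}\right)^{n-2} + C(n,\kappa)\left(\frac{\vol(\Om)}{\vol(\Gamma)}+\beta\right),
\]
which depends only on $n$, $\kappa$, $\vol(\Om)$, $\vol(\Gamma)$ and $\beta$, as required. There is no real obstacle in the argument; the only subtlety is identifying the correct conjugate pair $(p,q)=(n/(n-1),n)$ that turns the Young estimate into a clean $x^{2/n}\mapsto x^{2/(n-1)}$ trade, and checking that the case $n=2$ (where $(\vol(\Om)/\vol(\Gamma))^{n-2}=1$) is covered without issue.
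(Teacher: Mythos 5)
Your argument is correct and follows essentially the same route as the paper: both keep the leading $\tilde C^{2/(n-1)}$ term intact, absorb the middle $k^{2/n}$ summand into $1\cdot(k/\vol(\Gamma))^{2/(n-1)}$ plus a $k$-free remainder, and collect everything else into $B(\Omega,\beta)$. The only difference is cosmetic: where the paper splits into the cases $k\leqslant \overline{B}^{\,n(n-1)/2}$ and $k>\overline{B}^{\,n(n-1)/2}$, you achieve the same interpolation in one line via Young's inequality with exponents $(n/(n-1),n)$, which is arguably the cleaner bookkeeping.
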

\begin{rem}
An invariant that measures the concentration of the volume as in Definition \ref{def28032020} is  aver
by the authors in \cite{dryden2010} with the intersection index.

If $\Omega$ is an Euclidean domain of $\R^n$ so that its boundary $\Gamma$ is a compact hypersurface, the intersection index of $\Gamma$is defined as the supremum number of transversal intersections of real lines with $\Gamma$:
$$i(\Gamma):=\sup \{\sharp(\Gamma\cap \pi), ~\pi \text{ transversal line to } \Gamma \}.$$
\end{rem}
%
%

From \cite[Prop 2.1]{dryden2010}, for every $x\in \R^n$ and $r>0$, one has  
$$\vol\left(\Gamma\cap B(x,r)\right)\leqslant \frac{i(\Gamma)}{2}\vol\left(\mathbb{S}^{n-1}\right)r^{n-1},$$
where $B(x, r)$ denotes the Euclidean ball of center $x$ and radius $r$ in $\R^n$ and $\mathbb{S}^{n-1}$ the standard
$n$-sphere. This gives the following corollary to Theorem \ref{main21032020}.

\begin{cor}
Let $\Omega\subset\R^n$ be an Euclidean domain with boundary $\Gamma$, then for every $k\geqslant 1$, one has

$$
\lambda_{W,k}^{\beta}(\Om)
\leqslant
\left[ A_n{i(\Gamma)}^{\frac{2}{n-1}}\beta+1\right]\left( \frac{k}{\vol(\Gamma)}  \right)^{\frac{2}{n-1}}+B(\Omega,\beta),
 $$
 where  the constant $A_n$ depends only on the dimension $n$ and  $B(\Omega,\beta)$ depends on the geometry ($n$, $\vol(\Omega)$, $\vol(\Gamma)$) and on $\beta$.\\
If in addition, $\Gamma$ is convex, we have $i(\Gamma) = 2$ and
$$
\lambda_{W,k}^{\beta}(\Om)
\leqslant
\left( A'_n \beta+1\right)\left( \frac{k}{\vol(\Gamma)}  \right)^{\frac{2}{n-1}}+B(\Omega,\beta),
 $$
 where $A'_n$ is a dimensional constant.
\end{cor}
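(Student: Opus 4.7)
The plan is to derive this statement as a direct substitution into Corollary \ref{cor070420202}. Since $\R^n$ with its Euclidean metric is flat, we have $\ric\equiv 0$, so the ambient curvature hypothesis of Theorem \ref{main21032020} holds with $\kappa=0$. Accordingly the coefficient $\kappa\vol(\Omega)/\vol(\Gamma)+\beta$ appearing in Corollary \ref{cor070420202} reduces to $\beta$, and the curvature-dependent constants collapse to purely dimensional ones, so that the residual term $B(\Omega,\beta)$ depends only on $n$, $\vol(\Omega)$, $\vol(\Gamma)$ and $\beta$.

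Next, I would invoke the Dryden--Gordon--Hillairet volume bound quoted just above the statement: for every Euclidean domain $\Omega$ with compact boundary hypersurface $\Gamma$ and every ball $B(x,r)\subset\R^n$,
$$\vol\bigl(\Gamma\cap B(x,r)\bigr)\leqslant\tilde{C}\,r^{n-1},\qquad\tilde{C}:=\tfrac{i(\Gamma)}{2}\vol(\mathbb{S}^{n-1}).$$
In particular this bound holds for $0<r<1$, which is exactly the condition required by Definition \ref{def28032020}, so $\Omega\in\mathcal{M}(n,\tilde{C})$. Applying Corollary \ref{cor070420202} with $\kappa=0$ and this value of $\tilde{C}$ yields
$$\lambda_{W,k}^{\beta}(\Om)\leqslant\left(A(n)\,\beta\,\tilde{C}^{\frac{2}{n-1}}+1\right)\left(\tfrac{k}{\vol(\Gamma)}\right)^{\frac{2}{n-1}}+B(\Omega,\beta),$$
and factoring $\tilde{C}^{\,2/(n-1)}=[\vol(\mathbb{S}^{n-1})/2]^{2/(n-1)}\,i(\Gamma)^{2/(n-1)}$ allows me to absorb the purely dimensional prefactor into $A(n)$ and rename the result $A_n$, producing the first claimed inequality.

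For the convex case I would use the elementary fact that the intersection of any line with a convex set is a (possibly empty) interval, so its boundary on $\Gamma$ consists of at most two points; a generic transversal line realises exactly two intersections, whence $i(\Gamma)=2$. Substituting this and renaming $A'_n:=2^{2/(n-1)}A_n$ gives the second inequality. There is no real obstacle here beyond careful bookkeeping of constants; the only thing worth verifying is that the lower-order remainder $B(\Omega,\beta)$ inherited from Corollary \ref{cor070420202} does not secretly depend on $i(\Gamma)$, which is clear because $i(\Gamma)$ enters only through $\tilde{C}$, and $\tilde{C}$ is confined to the leading term of that corollary.
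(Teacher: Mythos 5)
Your proof is correct and is exactly the route the paper intends: the intersection-index volume bound places $\Omega$ in $\mathcal{M}(n,\tilde C)$ with $\tilde C=\tfrac{i(\Gamma)}{2}\vol(\mathbb{S}^{n-1})$, and Corollary \ref{cor070420202} applied with $\kappa=0$ then yields both inequalities after absorbing the dimensional factor $\bigl(\vol(\mathbb{S}^{n-1})/2\bigr)^{2/(n-1)}$ into $A_n$. (One trivial slip: the quoted volume bound is due to Colbois, Dryden and El Soufi, not ``Dryden--Gordon--Hillairet''.)
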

%
%
%
\paragraph*{Plan of the paper}  In the next section, Section \ref{section07042020a}, we present and prove technical results which will be used in the sequel. Section \ref{section07042020b}  is devoted to proving our main results.
\section{Metric measure space decomposition}\label{section07042020a}
Much of what we do in this section carries over to the metric space setting and is inspired by \cite{ColboisMaerten}, \cite{hyp} and \cite{Asma2011}.
We adopt the notation $(X,d,\mu)$ to designate a metric measure space such that 
$X$ is  complete and  locally compact with respect to the distance $d$ and $\mu$ is a Borel measure supported in a bounded Borelian subset $\overline{Y}\subset X$, such that $\mu(\overline{Y}\backslash Y)=0$ and $\mu(Y)\subset (0,\infty)$. For every $x\in X$ and $r>0$, $B(x,r):=\{y\in X: d(x,y)<r\}$ designates the metric open ball.
We denominate capacitor any  couple $(A, B)$ of subsets such  that $\emptyset\neq A \subset B \subset X$. Two capacitors $(A_1, B_1)$ and $(A_2, B_2)$ are disjoint if $B_1\cap B_2=\emptyset$.  A family of capacitors is a finite set of capacitors in $X$ that are pairwise disjoint. 
\begin{defn}[Covering property]
Let $(X, d)$ be a complete, locally compact metric space. We denote by $\diam(X)$ the diameter, defined as the maximal distance between any two points of $X$. Let $\varepsilon>1$, $\rho>0$ and an  increasing function $N:(0,\rho]\longrightarrow\N_{\geqslant 2}$. We say that 
$(X, d)$ satisfies the $(N,\varepsilon;\rho)$-covering property if each ball of radius $r$ such that $0<r \leqslant\rho$ can be covered by $N(r)$ balls of radius $\frac{r}{\varepsilon}$. We shall omit the symbol $\varepsilon$ in the notation if and only if $\varepsilon=2$. In the same way, we drop the term  $\rho$ from the notation if and only if $\rho\geqslant\diam(X)$, which we call a global covering property.
\end{defn}

\begin{defn}(Measure radial monotonicity)
Let $(X, d,\mu)$ be  as described above.
We say that the measure $\mu$ is radially monotone in $X$ if
\begin{equation}
\underset{r\rightarrow 0}{\lim}\{ \sup \mu(B(x, r)),~ x \in X\} = 0. 
\end{equation}
\end{defn}
If $(X, d, \mu)$ is as above  and radially monotone and satisfies the covering property, then $\underset{r\rightarrow 0}{\lim} N^2\mu(B(x, r)) = 0$ for all $x \in X$. 
We require the following Lemma due to Colbois and Maerten in the proof of our main theorems.
\begin{lem}[Colbois-Maerten, $2008$]\label{CM}
Let  $(X, d, \mu)$ be as above  and radially monotone. Assume that  $(X, d, \mu)$ satisfies the $(N,4;1)$-covering property, with $N$ constant. Let $r>0$ and $K\in\N$ such that  for every $x\in X$, $\mu(B(x,r))\leqslant\frac{\mu(X)}{4KN_r^2} =:\alpha$. Then there exists a family of $K$ capacitors $\{(A_i,B_i)\}_{1\leqslant i\leqslant K}$ with the following properties for $1\leqslant i,j\leqslant K$:
\begin{enumerate}
\item $\mu(A_i)\geqslant2N\alpha$,
 \item $B_i=A_i^r:= \{x\in X,~d(x,A_i) < r\}$ is the $r$-neighbourhood of $A_i$ and  $d(B_i,B_j)>2r$ whenever $i\neq j$.
\end{enumerate} 
\end{lem}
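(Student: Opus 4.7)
The plan is to construct the $K$ capacitors greedily, following the Grigor'yan--Netrusov--Yau strategy as adapted by Colbois--Maerten. The $(N,4;1)$-covering property provides dyadic control on ball masses, supplying the budget for the construction, and the hypothesis $\alpha = \mu(X)/(4KN^2)$ is calibrated precisely so that this budget survives $K$ iterations.

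First I would establish a scale-sensitive measure estimate by iterating the covering property: any ball $B(x,\rho)$ with $\rho\le 1$ is covered by $N^m$ balls of radius $\rho/4^m$, and taking $m$ with $\rho/4^m\le r$, the hypothesis $\mu(B(\cdot,r))\le\alpha$ yields $\mu(B(x,\rho))\le N^m\alpha$. In particular $\mu(B(x,4r))\le N\alpha$ and $\mu(B(x,16r))\le N^2\alpha$, so balls at the relevant scales carry only a small fraction of $\mu(X)$.

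Next I would build the capacitors inductively. Put $Z_0:=X$. At step $i\le K$, choose a point $x_i\in Z_{i-1}$ and a minimal radius $R_i\ge r$ so that $A_i:=B(x_i,R_i)\cap Z_{i-1}$ satisfies $\mu(A_i)\ge 2N\alpha$, set $B_i:=A_i^r$, and update $Z_i:=Z_{i-1}\setminus B(x_i,R_i+4r)$. The measure condition is automatic. For the separation, if $j>i$ then $A_j\subset Z_i$ is disjoint from $B(x_i,R_i+4r)$, hence $d(A_i,A_j)>4r$, and consequently $d(B_i,B_j)=d(A_i^r,A_j^r)>4r-2r=2r$, as required.

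The main obstacle is ensuring that this inductive step can always be carried out for $i\le K$. This requires (i) keeping the residual mass $\mu(Z_{i-1})$ above a safe threshold, say $\mu(X)/2$, which follows by bounding each removal $\mu(B(x_j,R_j+4r))$ via the dyadic estimate above and then invoking the hypothesis $\alpha\le \mu(X)/(4KN^2)$; and (ii) proving that inside a subset carrying mass at least $\mu(X)/2$, some ball in $Z_{i-1}$ really does capture mass $\ge 2N\alpha$. The second point is a pigeonhole argument combined with radial monotonicity: if every ball of the chosen scale in $Z_{i-1}$ had mass below $2N\alpha$, then a cover of $Z_{i-1}$ by such balls, whose cardinality is controlled by the covering property, would contradict the lower bound on $\mu(Z_{i-1})$. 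The delicate point is to select $R_i$ as small as possible so that the exponent entering the dyadic bound in (i) is uniformly controlled, which is exactly the reason the threshold in the hypothesis features the quadratic factor $N^2$.
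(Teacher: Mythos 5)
The paper itself offers no proof of this lemma (it is quoted from Colbois--Maerten), so your attempt can only be judged on its own terms. Your architecture --- greedy extraction of massive balls, removal of enlarged neighbourhoods, a residual-mass budget of $\mu(X)/2$ --- is the right one, and the separation argument is essentially fine. The gap is in the step the whole construction hinges on: bounding the mass removed at each stage. You propose to control $\mu(B(x_j,R_j+4r))$ by iterating the covering property down to scale $r$ and then invoking $\mu(B(\cdot,r))\leqslant\alpha$; this yields $\mu(B(x_j,R_j+4r))\leqslant N^{m_j}\alpha$ with $m_j\approx\log_4\bigl((R_j+4r)/r\bigr)$. But $R_j$ is the smallest radius for which a ball about $x_j$ captures mass $2N\alpha$ of $Z_{j-1}$, and there is no a priori relation between $R_j$ and $r$: every $r$-ball has mass at most $\alpha$ while the target is $2N\alpha$, so the measure can force $R_j$ to be comparable to the diameter of the support. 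Then $m_j$, hence $N^{m_j}\alpha$, is uncontrolled and the per-step budget $\mu(X)/(2K)=2N^2\alpha$ is blown. Choosing $R_i$ ``as small as possible for the centre $x_i$'' does not repair this.

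The missing idea is minimality over \emph{centres}, not only over radii. Set $\bar r_i(x):=\inf\{s>0:\ \mu(B(x,s)\cap Z_{i-1})\geqslant 2N\alpha\}$, which is finite because $\mu(Z_{i-1})\geqslant\mu(X)/2=2KN^2\alpha\geqslant 2N\alpha$ and the support is bounded (this also replaces your pigeonhole step (ii), which as written appeals to a covering of all of $Z_{i-1}$ --- something the $(N,4;1)$-property, valid only for balls of radius at most $1$, does not provide). Choose $x_i$ (nearly) minimizing $\bar r_i$ over $Z_{i-1}$. If $\bar r_i(x_i)\leqslant r$, the removed set lies in $B(x_i,4r)$, which one covering step plus the hypothesis at scale $r$ bounds by $N\alpha$. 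If $\bar r_i(x_i)>r$, cover $B(x_i,4\bar r_i(x_i))$ by $N$ balls $B(y_l,\bar r_i(x_i))$; since $\bar r_i(x_i)\leqslant\bar r_i(y_l)$ for every $l$, each of these carries $\mu(\cdot\cap Z_{i-1})\leqslant 2N\alpha$, so the removal costs at most $2N^2\alpha=\mu(X)/(2K)$ per step. This single application of the covering property, with the covering balls bounded by the \emph{target} mass $2N\alpha$ rather than by $\alpha$, is exactly what the factor $N^2$ in the hypothesis is calibrated for, and it is the step your proposal lacks. (Two smaller points: your separation gives only $d(B_i,B_j)\geqslant 2r$ rather than the strict inequality, fixable by enlarging the removed neighbourhood slightly; and the estimate $\mu(B(x,4r))\leqslant N\alpha$ tacitly requires $4r\leqslant 1$, a restriction already implicit in how the lemma is applied.)
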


\begin{defn}
Let $(X, d, \mu)$ be as above. The capacitors obtained when applying the construction of  Colbois-Maerten in Lemma \ref{CM} will be called CM-capacitors. Consistently, we call spherical capacitor   any capacitor  $(A, B)$  such  that $A$ and $B$ are both metric balls in $X$.
\end{defn}
 In the following lemma, we provide a useful procedure for construction  of a general family of (spherical or CM) capacitors. 
 \begin{lem}\label{lem25}
Let $(X, d,\mu)$ be  as we described above and $N\in \N$ such that $(X, d,\mu)$  satisfies the $(N,4; 1)$-covering property. Let $0<r_0\leqslant \frac{1}{10}$ be fixed, then for every $K\in \N$, $X$ satisfies at least one of the following properties: 
\begin{enumerate}
\item $X$ contains  a family of $K$ spherical capacitors
$\{(A_j,B_j)\}_{j=1}^K$  such that 
\begin{itemize}
\item $A_j=B(x_j,r_j)$ and $\mu(A_j)\geqslant \alpha $, with $ x_j\in X$, $r_j\in(0,2r_0]$,
 \item $B_j=B(x_j,2r_j)$.
\end{itemize}
\item $X$ contains  a family of $K$ CM-capacitors $\{(A_j,A_j^{\tilde{r}_0})\}_{j=1}^K$ where $\tilde{r}_0=\min\{r_0,\tau_1\}$ and 
\begin{equation}
\tau_1:=\sup\{r\in \R_{> 0} : \mu(B(x,r))\leqslant \alpha, \forall x\in X \}.
\end{equation}
\end{enumerate}
 \end{lem}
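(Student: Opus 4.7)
The plan is to argue by dichotomy on the value of $\tau_1$ relative to $r_0$, noting first that radial monotonicity of $\mu$ ensures $\tau_1>0$.

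First, I handle the regime $\tau_1\geqslant r_0$. Here $\tilde r_0=r_0\leqslant\tfrac{1}{10}\leqslant 1$, and by definition of $\tau_1$ as a supremum every ball of radius $r_0$ satisfies $\mu(B(x,r_0))\leqslant \alpha$. Thus the hypothesis of Lemma \ref{CM} is met at the scale $r=r_0$, and a direct application yields the family $\{(A_j,A_j^{r_0})\}_{j=1}^K$ of $K$ CM-capacitors required by conclusion~(2).

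For the opposite regime $\tau_1<r_0$, I set $\tilde r_0=\tau_1$. By the definition of the supremum, for every $\delta>0$ there exist $x_\delta\in X$ and $r_\delta\in(\tau_1,\tau_1+\delta)$ with $\mu(B(x_\delta,r_\delta))>\alpha$. My plan here is to attempt an iterative (Vitali-type) construction of $K$ disjoint spherical capacitors: at step $j$ I select such a heavy ball $B(x_j,r_j)$ with $r_j\in(\tau_1,r_0)\subset(0,2r_0]$, requiring that its outer double $B(x_j,2r_j)$ be disjoint from the doubles already chosen, and passing to the complement of the previously selected enlarged balls between steps so that disjointness is preserved. If this procedure succeeds $K$ times we obtain conclusion~(1).

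The main obstacle is ensuring that one of the two conclusions survives even when the iterative construction stalls before reaching step $K$. The saving observation is that conclusion~(2) is always available as a fallback in this regime: since $B(x,\tau_1)=\bigcup_{r<\tau_1}B(x,r)$, continuity of $\mu$ from below gives
\[
\mu(B(x,\tau_1))=\lim_{r\nearrow\tau_1}\mu(B(x,r))\leqslant \alpha \quad\text{for every } x\in X,
\]
so Lemma \ref{CM} applies at scale $r=\tau_1\leqslant r_0\leqslant 1$ and returns a family $\{(A_j,A_j^{\tau_1})\}_{j=1}^K$ of $K$ CM-capacitors, which is exactly conclusion~(2) with $\tilde r_0=\tau_1$. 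Consequently, in either regime at least one of (1), (2) holds, completing the proof.
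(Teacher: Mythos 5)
Your argument is logically valid for the lemma as stated, but it takes a genuinely different and much shorter route than the paper, and the difference matters. The paper's proof is an iterative extraction: as long as the critical radius $\tau_j$ of the restricted measure $\mu_j=\mu(\cdot\cap X_j)$ stays below $\tau_1$, it selects a heavy ball $A_j=B(x_j,\tfrac{3}{2}\tau_j)$ with $\mu(A_j)>\alpha$, deletes the enlarged ball $B(x_j,4r_1)$ (of measure at most $N^2\alpha$ by the covering property, so more than half of $\mu(X)$ always survives), and iterates; completing $K$ steps yields the spherical capacitors of alternative (1), and only when the extraction stalls does it invoke Lemma \ref{CM}, applied to the surviving set $X_{j_0}$ with the restricted measure. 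You instead observe that alternative (2) holds unconditionally: the set $\{r>0:\mu(B(x,r))\leqslant\alpha\ \forall x\}$ is a down-closed interval, so $\mu(B(x,r))\leqslant\alpha$ for all $r<\tau_1$, and since $B(x,\tilde r_0)=\bigcup_{r<\tilde r_0}B(x,r)$ continuity from below gives $\mu(B(x,\tilde r_0))\leqslant\alpha$ for every $x$ at the scale $\tilde r_0=\min\{r_0,\tau_1\}\leqslant\tfrac{1}{10}<1$, so Lemma \ref{CM} applies directly to $(X,d,\mu)$. This correctly proves the stated disjunction, and your sketched Vitali-type construction of alternative (1) is then dead weight and can be deleted. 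What your route does not buy is the constructive content of the paper's argument: you never actually produce spherical capacitors, and it is precisely those --- heavy balls $B(x_j,r_j)$ with controlled doubles $B(x_j,2r_j)$ --- that deliver the asymptotically sharp leading coefficient $\beta$ (rather than $\vol(\Om)/\vol(\Gamma)+\beta$) in front of $(k/\vol(\Gamma))^{2/(n-1)}$ in Section \ref{section07042020b}; since the main theorems add the bounds from both cases, nothing downstream formally breaks, but your proof does expose that the lemma's statement is strictly weaker than what its proof constructs. One small precision to add: in your first case ($\tau_1\geqslant r_0$) the inequality $\mu(B(x,r_0))\leqslant\alpha$ also needs the continuity-from-below step when $r_0=\tau_1$, since the supremum $\tau_1$ need not itself belong to the set it bounds.
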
 
 \begin{proof}
 The proof consists of two steps. The first part is  an iterative scheme presenting a method to construct the proof objects. And, if the assumptions to achieve the construction of the $K$ spherical capacitors in the first step do not hold, then we shall be able to use Lemma \ref{CM}, and solve the problem once in Step \ref{step2}.
\begin{proofpart} \label{step1}
  To start, define  
  $$X_1:=X,\quad \mu_1(A):=\mu(A),$$ for every measurable set $A\subset X$ and
  $$ \tau_1=\sup\{r\in \R_{> 0} : \mu_1(B(x,r))\leqslant \alpha, \forall~ x\in X_1\}.$$
We have the two possible cases:
\begin{enumerate}[label={}]
 \item \underline{Case  $\tau_1\geqslant r_0$:} Then $\mu_1(B(x,r_0))\leqslant  \alpha$, for every  $x\in X_1$. In this case we set $\tilde{r}_0:=r_0$ and move to Step \ref{step2}.    
    \item \underline{Case  $\tau_1< r_0$:} Set $r_1:=\frac{3}{2}\tau_1$. Then, there exists $x_1\in X_1$ such that  $\mu_1(B(x_1,r_1))>  \alpha$. We define 
    \begin{equation*}
A_1:=B(x_1,r_1),\quad B_1:=B(x_1,2r_1), \quad C_1:=B(x_1,4r_1). 
    \end{equation*} 
    We have then  $\mu(A_1)>  \alpha$.
    There are two important observations for the inductive step. First, $C_1$ can be covered by  $N^2$ balls of radius $\frac{r_1}{4}$ ($4r_1=6\tau_1<6r_0=6r_0<1$ then $C_1$ can be covered by  $N$ balls of radius $r_1$ and each of those balls can be covered by $N$ balls of radius $\frac{r_1}{4}$ ). Second, $\mu_1(B(x,\frac{r_1}{4}))\leqslant \alpha$ for all $x\in  X$, since $\frac{r_1}{4}<\tau_1$. Hence $\mu_1(C_1)\leqslant  N^2 \alpha$ and 
        \begin{equation*}
 \mu(X\backslash C_1)\geqslant \mu(X)-\mu(C_1)\geqslant \mu(X)\left(1-\frac{1}{4K}\right)> \frac{\mu(X)}{2}.   
        \end{equation*}    
\end{enumerate}  
 \paragraph*{First iteration.}
    We define  
  $$X_2:=X\backslash C_1,\quad \mu_2(A):=\mu(A\cap X_2),$$
  for every measurable set $A\subset X$ and
  $$ \tau_2:=\sup\{r\in \R_{> 0} : \mu_2(B(x,r))\leqslant \alpha, \forall~ x\in X_2\}.$$
   We have:
    \begin{enumerate}[label={}]
   \item \underline{Case  $\tau_2\geqslant \tau_1$:} Then $\mu_2(B(x, \tau_1))\leqslant\alpha$, for every $x\in X_2$. In this case we set $\tilde{r}_0:=\tau_1$ and move to Step \ref{step2}.    
    \item \underline{Case  $\tau_2<  \tau_1$:} Set $r_2:=\frac{3}{2}\tau_2$. Then, there exists $x_2\in X_2$ such that  $\mu_2(B(x_2,r_2))>  \alpha$. We define 
    $$A_2:=B(x_2,r_2),\quad B_2:=B(x_2,2r_2),$$ 
    $$C_2:=C_1\cup B(x_2,4r_1).$$  
    We have $\mu(A_2)=\mu(A_2\cap X)\geqslant\mu(A_2\cap X_2)=\mu_2(A_2) >  \alpha$.\\
    In addition, $r_2< r_1$ hence $B_1\cap B_2=\emptyset$.
    
Similarly, $B(x_2,4r_1)$ can be covered by  $N^2$ balls of radius $\frac{r_2}{4}$ ($4r_1=6\tau_1<6r_0<6r_0<1$ then $B(x_2,4r_1)$ can be covered by  $N$ balls of radius $r_2$ and each of those balls can be covered by $N$ balls of radius $\frac{r_2}{4}$ ). 

Since $\mu(B(x,\frac{r_2}{4}))\leqslant\mu(B(x,\frac{r_1}{4}))\leqslant \alpha$ for all $x\in  X$, $\mu(B(x_2,4r_1))\leqslant  N^2 \alpha$.
     Notice that 
   $\mu(C_2)\leqslant \mu(C_1)+\mu(B(x_2,4r_1))\leqslant 2N^2 \alpha$. Hence, one has
   \begin{equation*}
 \mu(X\backslash C_2)= \mu(X)-\mu(C_2)\geqslant \mu(X)\left(1-\frac{1}{2K}\right)> \frac{\mu(X)}{2}.   
        \end{equation*}        
   
    \end{enumerate}
 \paragraph*{Iteration $j$, with  $1 < j \leqslant K$.}
Suppose that we have constructed $j-1$ capacitors $\{(A_1,B_1),\ldots, (A_{j-1},B_{j-1})\}$ satisfying for $1\leqslant i\neq l\leqslant j-1$
 \begin{equation*}
 \begin{cases}
 \mu(A_i)>\alpha,\\
 B_i\cap B_l=\emptyset,\\
  C_{j-1}=\bigcup_{i=1}^{j-1}B(x_i,4r_1) \quad\text{ and }\quad \mu(C_{j-1})\leqslant (j-1) N^2 \alpha
 \end{cases}
 \end{equation*}
 with for every $1\leqslant i\leqslant j-1$
 \begin{equation*}
  \begin{cases}
  X_i= X\backslash C_{i-1},~ C_0:=\emptyset\\
  \mu_i(\cdot)=\mu(\cdot\cap X_i)\\
  \tau_i=\sup\{r\in \R_{> 0} : \mu_i(B(x,r))\leqslant \alpha, \forall~ x\in X_i\}\\
  0< \tau_i< \tau_1<r_0.
 \end{cases} 
 \end{equation*}

    Define  $X_j:=X\backslash C_{j-1}$, then
    \begin{align*}
\mu(X_j)&=\mu(X\backslash C_{j-1})=\mu(X)-\mu(C_{j-1})\\
&\geqslant \mu(X)\left(1-\frac{j-1}{4K}\right)>\frac{\mu(X)}{2}>0.
    \end{align*}
 Then, define the measure  $\mu_j(A):=\mu(A\cap X_j),$
  for all measurable set $A\subset X$ and 
  $$ \tau_j:=\sup\{r\in \R_{> 0} : \mu_j(B(x,r))\leqslant \alpha, \forall~ x\in X_j\}.$$

    \begin{enumerate}[label={}]    
    \item \underline{Case  $\tau_j\geqslant \tau_1$:} We have $\mu_j(B(x,\tau_1))\leqslant  \alpha$, for every $x\in X_j$. In this case we move to Step \ref{step2}.
    
    \item \underline{Case  $\tau_j< \tau_1$:} Set  $r_j:=\frac{3}{2}\tau_j$. Then, there exists $x_j\in X_j$ such that  $\mu_j(B(x_j,r_j))>  \alpha$. We define 
    $$A_j:=\mu_j(B(x_j,r_j)),\quad B_j:=\mu_j(B(x_j,2r_j)),$$
    $$ C_j:=C_{j-1}\cup B(x_j,4r_1)$$ 
  One has  $\mu(A_j)\geqslant\mu(A_j\cap X_2)=\mu_j(A_j) >  \alpha$.\\
Again $B(x_j,4r_1)$ can be covered by  $N^2$ balls of radius $\frac{r_1}{4}$,  hence 
$$\mu(B(x_j,4r_1))\leqslant  N^2 \alpha.$$
We have
     \begin{align*}
  \mu(C_j)&\leqslant\mu(C_{j-1})+\mu(B(x_j,4r_1))\\& \leqslant  jN^2 \alpha\leqslant  KN^2 \alpha.
     \end{align*} 
    Since for $i=1,\ldots,j$ we have $r_i<r_1$,  $B_j\cap B_i=\emptyset$ for all $1\leqslant i\leqslant j-1$ and
   $$\mu(X\backslash C_j)\geqslant \mu(X)\left(1- \frac{1}{4}\right)> \frac{\mu(X)}{2}. $$

%
    \end{enumerate}    
    \end{proofpart}
\begin{proofpart}\label{step2} 
Let  $\tilde{r}_0:=\min\{r_0,\tau_1\}$. We suppose that, for some $j_0\in \{1,\ldots, K-1\}$, we have $\mu_{j_0}(B(x,\tilde{r}_0))\leqslant  \alpha$, for every $ x\in X_{j_0}$. Clearly, if $j_0=1$ (respectively $j_0>1$), then $\tilde{r}_0=r_0$ (respectively $\tilde{r}_0=\tau_1$). In each case, $\mu_{j_0}(X_{j_0})=\mu(X_{j_0}>\frac{\mu(X)}{2}>0$. Applying Lemma \ref{CM} to $\left( X_{j_0},d,\mu_{j_0}\right)$, we obtain in  $X_{j_0}$ (then in $X$) a family of $K$ CM-capacitors $\{(A_j,A_j^{\tilde{r}_0})\}_{j=1}^K$. This concludes the proof.
      \qedhere
\end{proofpart}
 \end{proof}

\section{Proof of mains Theorems}\label{section07042020b}

In this section we prove our main results. We start with the following proposition which will be useful to prove Theorem \ref{main21032020}.
\begin{prop}\label{prop020432020}
Let $n\geqslant 2$ and $\tilde{C}$ be a positive real number. Let $(M, g)$ be a complete $n$-dimensional Riemannian manifold with $\ric (M,g) \geqslant -(n-1)\kappa^2$, with $\kappa\in \R_{\geqslant 0}$. Let $\Omega\subset M$ be a domain with smooth boundary $\Gamma$ such that $\Omega\in \mathcal{M}(n,\tilde{C})$. Then for every $k\geqslant 1$, one has

\begin{multline}
\lambda_{W,k}^{\beta}(\Om)
\leqslant
 A(n,\beta,\kappa,\Omega)\left( \frac{\tilde{C}k}{\vol(\Gamma)}  \right)^{\frac{2}{n-1}}
 \\
+B(n,\kappa,\Omega)
 \left( \frac{k}{\vol(\Gamma)}  \right)^{\frac{2}{n}} 
+C(n,\beta,\kappa,\Omega),
\end{multline}
where 
\begin{align*}
A(n,\beta,\kappa,\Omega)&=A(n,\kappa) \left[\frac{\vol(\Omega)}{\vol(\Gamma)}+\beta\right]\\
B(n,\kappa,\Omega)&=B(n,\kappa)
 \left( \frac{\vol(\Om)}{\vol(\Gamma)}  \right)^{1-\frac{2}{n}}\\
 C(n,\beta,\kappa,\Omega)&=C(n,\kappa) \left[  \frac{\vol(\Om)}{\vol(\Gamma)}+\beta \right],
\end{align*}
$A(n,\kappa)$, $B(n,\kappa)$ and $C(n,\kappa)$ depend only on the dimension $n$ and $\kappa$. 
\end{prop}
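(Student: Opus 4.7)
The plan is to deploy the variational characterization \eqref{char} with $K := k+1$ disjointly supported test functions extracted from the capacitor decomposition of Lemma~\ref{lem25}. I will work in the metric measure space $(\overline{\Omega}, d_g, \mu)$ where $\mu(E) := \vol(E\cap \Gamma)$, so that $\mu(X) = \vol(\Gamma)$ and the hypothesis $\Omega\in\mathcal{M}(n,\tilde C)$ supplies both radial monotonicity and the local volume bound $\mu(B(x,r))\leqslant \tilde C r^{n-1}$ for $r<1$. Bishop--Gromov comparison under $\ric\geqslant -(n-1)\kappa^2$ then yields a uniform $(N_0,4;1)$-covering property with $N_0 = N_0(n,\kappa)\in\N$. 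Setting $\alpha := \vol(\Gamma)/(4KN_0^2)$, I will apply Lemma~\ref{lem25} with a radius $r_0\in(0,1/10]$ to be calibrated; to each capacitor $(A_j,B_j)$ I attach a Lipschitz plateau bump $\phi_j$ equal to $1$ on $A_j$, vanishing outside $B_j$, with $|\nabla\phi_j|$ bounded by the reciprocal of the gap. Since the $\phi_j$ have disjoint supports, $\lambda_{W,k}^\beta(\Omega) \leqslant \max_j R_\beta(\phi_j)$.

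In the spherical alternative, each $A_j = B(x_j,r_j)$ with $(\alpha/\tilde C)^{1/(n-1)}\leqslant r_j \leqslant 2r_0$, the lower bound coming from the concentration hypothesis. Using Bishop--Gromov to bound $\vol(B_j\cap\Omega)\leqslant c(n,\kappa)r_j^n$ and the concentration hypothesis to bound $\vol(B_j\cap\Gamma)\leqslant 2^{n-1}\tilde C r_j^{n-1}$, the Rayleigh quotient will satisfy
\[ R_\beta(\phi_j)\leqslant \frac{c(n,\kappa)r_j^{n-2} + 2^{n-1}\beta\tilde C r_j^{n-3}}{\alpha}. \]
I will calibrate $r_0 \asymp (\alpha/\tilde C)^{1/(n-1)}$, which pins $r_j$ to the boundary scale and produces the $\beta(\tilde C k/\vol(\Gamma))^{2/(n-1)}$ contribution. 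To sharpen the prefactor of the bulk term from $c(n,\kappa)$ down to $\vol(\Omega)/\vol(\Gamma)$, I replace the Bishop--Gromov estimate on those capacitors with excessive bulk volume (where $r_j^n\gtrsim \vol(\Omega)/K$) by the disjointness bound $\vol(B_j\cap\Omega)\leqslant 2\vol(\Omega)/K$, available on at least $K/2$ indices by an averaging argument, while doubling $K$ to compensate.

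In the CM alternative the neighborhoods $A_j^{\tilde r_0}$ are pairwise disjoint, so by averaging at least $K/2$ indices will satisfy simultaneously $\vol(A_j^{\tilde r_0}\cap\Omega)\leqslant 2\vol(\Omega)/K$ and $\vol(A_j^{\tilde r_0}\cap\Gamma)\leqslant 2\vol(\Gamma)/K$, giving
\[ R_\beta(\phi_j)\lesssim \frac{\vol(\Omega)/\vol(\Gamma) + \beta}{\tilde r_0^{\,2}}. \]
Calibrating $r_0 \asymp (\vol(\Omega)/k)^{1/n}$ so that $\tilde r_0 = r_0$ (which holds whenever $\tau_1 \geqslant r_0$) then recovers the middle term $B(n,\kappa)(\vol(\Omega)/\vol(\Gamma))^{1-2/n}(k/\vol(\Gamma))^{2/n}$. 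The residual regimes — where $r_0$ would exceed the cap $1/10$, where $\tilde r_0 = \tau_1 < r_0$ forces a smaller neighborhood, or where the two scales are incompatible — contribute only to the additive constant $C(n,\kappa,\Omega)[\vol(\Omega)/\vol(\Gamma) + \beta]$. Since exactly one of the two alternatives of Lemma~\ref{lem25} holds, $\max_j R_\beta(\phi_j)$ is always bounded by the sum of the three contributions, which proves the proposition. The principal obstacle will be the coupled calibration of $r_0$: its optimal value differs between the two alternatives, and the cleanest resolution is to run the argument twice (once with $r_0$ at the boundary scale $(\alpha/\tilde C)^{1/(n-1)}$ and once at the bulk scale $(\vol(\Omega)/k)^{1/n}$), letting each invocation produce the term it naturally yields while the saturation regimes are routed into the additive constant.
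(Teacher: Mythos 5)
Your overall architecture coincides with the paper's: the measure $\mu(E)=\vol(E\cap\Gamma)$ on $(M,d)$, the $(N,4;1)$-covering property via Bishop--Gromov, the dichotomy of Lemma~\ref{lem25}, plateau test functions on the capacitors, the disjointness/averaging selection of good indices, and the min--max characterization \eqref{char}. Where you genuinely diverge is in how the radii are controlled. The paper fixes $r_0=\frac{1}{10}$ once and for all and then applies H\"older's inequality to $\int_{\Om\cap B_j}|\nabla f_j|^2$ (with exponent $n$) and to $\int_{\Gamma\cap B_j}|\nabla f_j|^2$ (with exponent $n-1$); combined with $\vol(B_j)\leqslant c(n,\kappa)r_j^n$ and $\vol(\Gamma\cap B_j)\leqslant\tilde{C}(2r_j)^{n-1}$ this makes both estimates independent of $r_j$, and it is precisely this H\"older step on the bulk integral that manufactures the middle term $(\vol(\Om)/\vol(\Gamma))^{1-2/n}(k/\vol(\Gamma))^{2/n}$. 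You instead use the crude bound $|\nabla\phi_j|^2\leqslant r_j^{-2}$ and try to control $r_j$ by calibrating $r_0$, exploiting the lower bound $r_j\gtrsim(\alpha/\tilde{C})^{1/(n-1)}$ that follows from $\mu(A_j)\geqslant\alpha$ and the concentration hypothesis (note you must pass to a center on $\Gamma$ to invoke Definition~\ref{def28032020}, which costs only a factor of $2$).

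The genuine gap is in the "run the argument twice" device and the accompanying claim that the residual regimes "contribute only to the additive constant". You do not get to choose which alternative of Lemma~\ref{lem25} occurs for a given $r_0$: each invocation must yield a conforming bound in \emph{both} alternatives, and in your bulk-scale run the CM alternative with $\tilde{r}_0=\tau_1<r_0$ produces a term of order $[\vol(\Om)/\vol(\Gamma)+\beta]\,\tau_1^{-2}$, whose only available control is again $\tau_1\gtrsim(\alpha/\tilde{C})^{1/(n-1)}$ --- this is a term growing like $(\tilde{C}k/\vol(\Gamma))^{2/(n-1)}$, not an additive constant. Relatedly, your scheme never actually produces the $(k/\vol(\Gamma))^{2/n}$ term: every contribution you derive lands either in the $k^{2/(n-1)}$ slot (with prefactor $\vol(\Om)/\vol(\Gamma)+\beta$) or in the constant slot. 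This still formally proves the Proposition, since one may add the nonnegative middle term for free, and indeed a \emph{single} run at the boundary scale $r_0\asymp\min\{(\alpha/\tilde{C})^{1/(n-1)},\tfrac{1}{10}\}$, handling both alternatives there, suffices; the second run is superfluous and, as described, unjustified. Two smaller repairs: $K=k+1$ is too few --- to find $k+1$ indices satisfying the $\Om$-volume and $\Gamma$-volume averaging bounds \emph{simultaneously} you need $K\geqslant 4k$ as in the paper (or $K\geqslant 2(k+1)$ with the doubled constants you mention, but then state that choice consistently); and if you want the sharper middle term rather than absorbing the bulk into the leading term, you should import the paper's H\"older step rather than the calibration.
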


\begin{proof}
Consider the  metric measure space $(M,d, \mu)$, where  $d$ is the distance from the metric $g$ and $\mu$ is the Borel measure with support $\Gamma$ defined for each Borelian $A$ of  $M$ by
$$\mu(A):=\int_{A\cap\Gamma} \mathrm{dv}_g.$$
This measures the area of the part of the hypersurface $\Gamma$ lying inside the subset $A$. 

We will start by showing that the metric space $(M,d, \mu)$	 satisfies the assumptions of Lemma \ref{lem25}. Then, according to the nature of the capacitors obtained after applying Lemma \ref{lem25}, we define a disjointly supported family of test functions  and  bound their Rayleigh quotient. This allows us to conclude the proof using the variational characterisation of $\lambda_{W,k}^{\beta}(\Om)$.

Being a topological manifold, $M$ is locally compact and the radial monotonicity is fulfilled. Thanks to the Hopf-Rinow theorem  $(M,d)$ is a complete metric space. The measure $\mu$ is supported in $\Gamma$, we have clearly $\mu(\overline{\Gamma}\backslash \Gamma)=\mu(\emptyset)=0$ and $\mu(\Gamma)=\vol(\Gamma)\in (0,\infty)$.

To show that the metric space $(M,d)$ satisfies the $(N,4; 1)$-covering property for some constant $N\in\N_{\geqslant 2}$, we take $x\in M$, $0<r<1$ and $\{B(x_i,\frac{r}{8})\}_{i=1}^N$ a maximal family of disjoint balls with center $x_i\in B(x,r)$. By the maximality assumption, the family $\{B(x_i,\frac{r}{4})\}_{i=1}^N$ covers $B(x,r)$. To prove that $N$ is finite, take $i_0\in\{1,\ldots,N\}$ such that
 $$\vol \left(B\big(x_{i_0},\frac{r}{8}\big)\right)=\min_{1\leqslant i\leqslant N}\vol \left(B\big(x_i,\frac{r}{8}\big)\right).$$
  Then one has that $N\vol(B(x_{i_0},\frac{r}{8}))\leqslant \sum_{1\leqslant i\leqslant N}\vol(B(x_{i},\frac{r}{8}))$ since the balls  $B(x_{i},\frac{r}{8})$ are pairwise disjoint. In addition, $B(x_{i},\frac{r}{8})\subset B(x_{i},r+\frac{r}{8})$  for every $x_i\in B(x,r)$. Hence
$ N\vol(B(x_{i_0},\frac{r}{8}))\leqslant\vol(B(x,\frac{9r}{8}))$,
$$N\leqslant \frac{\vol(B(x,\frac{9r}{8}))}{\vol(B(x_{i_0},\frac{r}{8}))}
\leqslant \frac{\vol(B(x,2r))}{\vol(B(x_{i_0},\frac{r}{8}))}
\leqslant \frac{\vol(B(x_{i_0},4r))}{\vol(B(x_{i_0},\frac{r}{8}))}.$$
Using the volume comparison Theorem (Bishop 1964, Gromov 1980), one has 
$$\frac{\vol(B(x_{i_0},4r))}{\vol(B(x_{i_0},\frac{r}{8}))}\leqslant \frac{\nu(n,-\kappa^2,4r)}{\nu(n,-\kappa^2,\frac{r}{8})},$$
where $\nu(n,\kappa^2,r)$ denotes the volume of a ball of radius $r$ in the constant curvature model space $M_{\kappa^2}^n$. Then
\begin{align*}
N&\leqslant\frac{\int_0^{4r}\sinh^{n-1}(\kappa t)\mathrm{d}t}{\int_0^{\frac{r}{8}}\sinh^{n-1}(\kappa t)\mathrm{d}t}
\leqslant \frac{\int_0^{4r} \left[(\kappa t)e^{\kappa t}\right]^{n-1}\mathrm{d}t}{\int_0^{\frac{r}{8}}(\kappa t)^{n-1}\mathrm{d}t}\\
&\leqslant \frac{e^{4r(n-1)\kappa } \int_0^{4r} t^{n-1}\mathrm{d}t}{\int_0^{\frac{r}{8}}t^{n-1}\mathrm{d}t}=2^{5n}e^{4r(n-1)\kappa }<2^{5n}e^{4(n-1)\kappa }.
\end{align*}
Now set $K:=4k$ and $r_0:=\frac{1}{10}$. Applying Lemma \ref{lem25}, there exists in $M$ either a family $\mathfrak{B}=\{(A_j,B_j)\}_{j=1}^{4k}$ of spherical capacitors such that
\begin{itemize}
\item $A_j=B(x_j,r_j)$,~ $x_j\in X$,~ $r_j\in(0,2r_0]$,~ $\mu(A_j)\geqslant \alpha=\frac{\vol(\Gamma)}{16kN^2}$,
 \item $B_j=B(x_j,2r_j)$,
\end{itemize}
or a family $\mathfrak{C}=\{(A_j,A_j^{r_0})\}_{j=1}^{4k}$ of $4k$ CM-capacitors such that  $\mu(A_j)\geqslant 2N\alpha$.

\paragraph*{First case $M\supset \mathfrak{B}$.} For each $1\leqslant j \leqslant 4k$, we consider the function $f_j$ supported
in $B_j=B(x_j,2r_j)\in \mathfrak{B}$ and defined by 
$$f_j(x):=
\begin{cases}
\min \{1,2-\frac{d(x_j,x)}{r_j}\}\quad &\forall x\in B_j,\\
0\quad &\forall x\in M\backslash B_j.
\end{cases}
$$
One sees that
$$ R_\beta(f_j)\leqslant\frac{\int_{\Om\cap B_j}|\nabla f_j|^2 \mathrm{d}_M+\beta\int_{\Gamma\cap B_j}{|\nabla {f_j}|^2 \mathrm{d}_\Gamma}}{\int_{\Gamma\cap A_j}{{f_i}^2 \mathrm{d}_\Gamma}}.$$
\begin{enumerate}[label=\roman*)]
\item \label{ai} Since  for every $x\in A_j$, $f_j(x)=1$, one has
$$\int_{\Gamma\cap A_j}{{f_j}^2 \mathrm{d}_\Gamma}\geqslant\int_{\Gamma\cap A_j}\mathrm{d}_\Gamma\geqslant \mu(A_j)\geqslant \frac{\vol(\Gamma)}{16 N^2 k}. $$
\item\label{aii} Set for $x\in M$, $d_j(x):=d(x_j, x)$, then
$$|\nabla f_j|\leqslant \left|\nabla(2-\frac{d_j(x)}{r_j})\right|=\left|\frac{1}{r_j}\nabla(d_j(x))\right|\leqslant\frac{1}{r_j}.$$ 
 By H\"older's inequality, we have 
 \begin{align*}
 \int_{\Om\cap B_j}|\nabla f_j|^2 \mathrm{d}_M &\leqslant \left(\int_{\Om\cap B_j}|\nabla f_j|^n \mathrm{d}_M \right)^{\frac{2}{n}}\left(\int_{\Om\cap B_j}\mathrm{d}_M \right)^{1-\frac{2}{n}}\\
 &\leqslant \left(\frac{1}{r_j^n}\int_{\Om\cap B_j}1 \mathrm{d}_M \right)^{\frac{2}{n}}\left(\int_{\Om\cap B_j}\mathrm{d}_M \right)^{1-\frac{2}{n}}\\
 &\leqslant \left(\frac{1}{r_j^n}\vol(B_j) \right)^{\frac{2}{n}}\left( \vol(\Om\cap B_j) \right)^{1-\frac{2}{n}}.\\
 \end{align*}
 However, one has 
 \begin{align*}
  \vol (B_j)&\leqslant \nu(n,-\kappa^2, r_j)\leqslant \frac{2^n}{n}r_j^n e^{2(n-1)r_j\kappa }\\
  &\leqslant \frac{2^n}{n}r_j^n e^{2(n-1)\kappa }=:c(n,\kappa) r_j^n.
 \end{align*}
In addition, the $B_j$'s are pairwise disjoint then 
$$\sum_{j=1}^{4k}\vol(\Om\cap B_j)\leqslant \vol(\Om).$$
 We deduce that at least $2k$ of $B_j$'s satisfy
\begin{equation}
\vol(\Om\cap B_j)\leqslant\frac{\vol(\Om)}{k}.\label{alk1}
\end{equation} 
Up to re-ordering, we assume that for the first $2k$ of the $B_j$'s we have \eqref{alk1}.
Hence, 
$$ \int_{\Om\cap B_j}|\nabla f_j|^2 \leqslant c(n,\kappa)^{\frac{2}{n}}\left(\frac{\vol(\Om)}{k} \right)^{1-\frac{2}{n}},\quad \forall~ 1\leqslant j\leqslant 2k.$$
\item \label{aiii}
Notice that  $d(x_j,\Gamma)\leqslant 2r_j\leqslant 4 r_0<R_0$. We have
 \begin{equation}\label{assumpt25032020}
 \vol(\Gamma\cap B(x_j,2r_j))\leqslant \tilde{C} (2r_j)^{n-1}.
  \end{equation}
 and then
 \begin{align*}
 \int_{\Gamma\cap B_j}|\nabla f_j|^2 \mathrm{d}_M &\leqslant \left(\int_{\Gamma\cap B_j}|\nabla f_j|^{n} \mathrm{d}_M \right)^{\frac{2}{n-1}}\left(\int_{\Gamma\cap B_j}\mathrm{d}_M \right)^{1-\frac{2}{n-}}\\
 &\leqslant \left(\frac{1}{r_j^{n-1}}\int_{\Gamma\cap B_j} \mathrm{d}_M \right)^{\frac{2}{n-1}}\left(\int_{\Gamma\cap B_j}\mathrm{d}_M \right)^{1-\frac{2}{n-1}}\\
 &\leqslant \left(\frac{1}{r_j^{n-1}}\vol(\Gamma\cap B(x_j,2r_j)) \right)^{\frac{2}{n-1}}\left( \vol(\Gamma\cap B_j) \right)^{1-\frac{2}{n-1}}\\
 &\leqslant \left(2^{n-1} \tilde{C} \right)^{\frac{2}{n-1}}\left( \vol(\Gamma\cap B_j) \right)^{1-\frac{2}{n-1}}.
 \end{align*}
In addition, again the $B_j$'s are pairwise disjoint then 
$$\sum_{j=1}^{4k}\vol(\Gamma\cap B_j)\leqslant \vol(\Gamma).$$
 Hence at least $k$ of $B_j$'s satisfy
\begin{equation}\label{alk2}
\vol(\Gamma\cap B_j)\leqslant\frac{\vol(\Gamma)}{k}.
\end{equation} 
Up to re-ordering, we assume that for the first $k$ of the $B_j$'s inequality \eqref{alk2} holds.
Hence, 
 \begin{align*}
 \int_{\Gamma\cap B_j}|\nabla f_j|^2 \mathrm{d}_M 
 \leqslant 4\tilde{C}^{\frac{2}{n-1}}\left( \frac{\vol(\Gamma)}{k} \right)^{1-\frac{2}{n-1}}.
 \end{align*}
\end{enumerate}
Combining \ref{ai}, \ref{aii}, \ref{aiii}, one has
\begin{align}
 R_\beta(f_j)&\leqslant  \frac{16 N^2 k}{\vol(\Gamma)}
\Big[
 c(n,\kappa)^{\frac{2}{n}}\left(\frac{\vol(\Om)}{k} \right)^{1-\frac{2}{n}}
+4\beta \tilde{C}^{\frac{2}{n-1}}\left( \frac{\vol(\Gamma)}{k} \right)^{1-\frac{2}{n-1}}
 \Big] \nonumber\\
&\leqslant
B(n,\kappa)\left( \frac{k}{\vol(\Gamma)}  \right)^{\frac{2}{n}} 
 \left( \frac{\vol(\Om)}{\vol(\Gamma)}  \right)^{1-\frac{2}{n}}+A(n,\kappa)\beta\left( \frac{\tilde{C}k}{\vol(\Gamma)}  \right)^{\frac{2}{n-1}},
\label{estim1}
\end{align}
where
$  A(n,\kappa):=2^{16n}e^{8(n-1)\kappa }$ and
  $B(n,\kappa):=2^{14n}e^{8(n-1)\kappa } c(n,\kappa)^\frac{2}{n}$.
\paragraph*{Second case  $M\supset \mathfrak{C}$.}
For each $1\leqslant j \leqslant 4k$, we consider the function $\varphi_j$ supported
on $A_j^{\tilde{r}_0}$ defined by 
$$\varphi_j(x):=
\begin{cases}
1-\frac{d(A_j,x)}{\tilde{r}_0} \quad &\forall x\in A_j^{\tilde{r}_0},\\
0\quad &\forall x\in M\backslash A_j^{\tilde{r}_0}.
\end{cases}
$$
We have
$$ R_\beta(\varphi_j)\leqslant\frac{\int_{\Om\cap A_j^{\tilde{r}_0}}|\nabla \varphi_j|^2 \mathrm{d}_M+\beta\int_{\Gamma\cap A_j^{\tilde{r}_0}}{|\nabla {\varphi_j}|^2 \mathrm{d}_\Gamma}}{\int_{\Gamma\cap A_j}{{\varphi_i}^2 \mathrm{d}_\Gamma}}.$$
\begin{enumerate}[label=\roman*)]
\item \label{bi} Since  for every $x\in A_j$, $\varphi_j(x)=1$, one has
$$\int_{\Gamma\cap A_j}{{\varphi_j}^2 \mathrm{d}_\Gamma}\geqslant\int_{\Gamma\cap A_j}\mathrm{d}_\Gamma\geqslant \mu(A_j)\geqslant \frac{\vol(\Gamma)}{8N k} $$
\item\label{bii} We have
$$
 \int_{\Om\cap A_j^{\tilde{r}_0}}|\nabla \varphi_j|^2 \mathrm{d}_M \leqslant \frac{1}{\tilde{r}_0^2}\vol(\Om\cap A_j^{\tilde{r}_0}).
$$
The $A_j^{\tilde{r}_0}$'s are pairwise disjoint then $\sum_{j=1}^{4k}\vol(\Om\cap A_j^{\tilde{r}_0})\leqslant \vol(\Om)$.\\
 We deduce that at least $2k$ of $A_j^{\tilde{r}_0}$'s satisfies 
\begin{equation}
\vol(\Om\cap A_j^{\tilde{r}_0})\leqslant\frac{\vol(\Om)}{k}.\label{blk1}
\end{equation} 
Up to re-ordering, we assume that for the first $2k$ of the $A_j^{\tilde{r}_0}$'s we have \eqref{blk1}.
Hence, 
$$ \int_{\Om\cap A_j^{\tilde{r}_0}}|\nabla \varphi_j|^2 \leqslant\frac{1}{\tilde{r}_0^2} \frac{\vol(\Om)}{k},\quad \forall~ 1\leqslant j\leqslant 2k.$$
\item \label{biii}
By the same argument, at least $k$ of $A_j^{\tilde{r}_0}$'s satisfy
\begin{equation}
\vol(\Gamma\cap A_j^{\tilde{r}_0})\leqslant\frac{\vol(\Gamma)}{k}.\label{blk2}
\end{equation} 
Up to re-ordering, we assume that for the first $k$ of the $A_j^{\tilde{r}_0}$'s inequality \eqref{blk2} holds.
Hence, 
$$ \int_{\Gamma\cap A_j^{\tilde{r}_0}}|\nabla \varphi_j|^2\leqslant \frac{1}{\tilde{r}_0^2} \frac{\vol(\Gamma)}{k},\quad \forall~ 1\leqslant j\leqslant k.$$
\end{enumerate}
Combining \ref{bi}, \ref{bii}, \ref{biii}, one has
\begin{align*}
 R_\beta(\varphi_j)&\leqslant  \frac{2N k}{\vol(\Gamma)}\left[ \frac{1}{\tilde{r}_0^2} \frac{\vol(\Om)}{k}+\beta \frac{1}{\tilde{r}_0^2} \frac{\vol(\Gamma)}{k}   \right]\\
&= \frac{2N}{\tilde{r}_0^2} \left[  \frac{\vol(\Om)}{\vol(\Gamma)}+\beta \right].
\end{align*}

Hence, if $\tilde{r}_0=r_0$, then
\begin{equation}\label{estim2}
 R_\beta(\varphi_j)\leqslant C(n,\kappa) \left[  \frac{\vol(\Om)}{\vol(\Gamma)}+\beta \right],
\end{equation}
where $C(n,\kappa):=\frac{2^{5n+3}e^{4(n-1)\kappa } }{r_0^2}= 5^2\cdot 2^{5(n+1})e^{4(n-1)\kappa } $. Otherwise, $\tilde{r}_0=\tau_1$ and there exists $x\in X$ such that $\mu\left(B(x,2\tilde{r}_0)\right)>\alpha$. Take $y\in B(x,2\tilde{r}_0)\cap \Gamma$, since $B(y,4\tilde{r}_0)\supset B(x,2\tilde{r}_0)$, we have $\tilde{C}(4\tilde{r}_0)^{n-1}\geqslant \mu\left(B(y,4\tilde{r}_0)\right)> \alpha$. 
Consequently, $\frac{1}{\tilde{r}_0^2}\leqslant 4\left(\frac{16\tilde{C} kN^2}{\vol(\Gamma)} \right)^{\frac{2}{n-1}}$ and we have
\begin{equation}\label{estim2prime}
 R_\beta(\varphi_j)\leqslant A'(n,\kappa)\left[ \frac{\vol(\Om)}{\vol(\Gamma)}+\beta \right]\left(\frac{\tilde{C}k}{\vol(\Gamma)} \right)^{\frac{2}{n-1}},
\end{equation}
with $A'(n,\kappa)$ depending on $n$ and $\kappa$.

In both cases $ R_\beta(\varphi_j)$ is bounded from above by the sum of the right-hand
sides in \eqref{estim1}, \eqref{estim2} and \eqref{estim2prime}. Without loss of generality, one can assume that $A(n,\kappa)\geqslant A'(n,\kappa)$. One concludes the argument by applying the min-max characterization of $\lambda_{W,k}^{\beta}(\Om)$.
\end{proof}

\begin{proof}[Proof of Theorem \ref{main21032020}]
From Proposition \ref{prop020432020}, for
every $k\geqslant 1$, we have
\begin{align}
\lambda_{W,k}^{\beta}(\Om)
&\leqslant
A(n,\kappa)\left[\frac{\vol(\Omega)}{\vol(\Gamma)}+\beta\right]\left( \frac{\tilde{C}k}{\vol(\Gamma)}  \right)^{\frac{2}{n-1}}
\nonumber \\
&+B(n,\kappa)
 \left( \frac{\vol(\Om)}{\vol(\Gamma)}  \right)^{1-\frac{2}{n}}
 \left( \frac{k}{\vol(\Gamma)}  \right)^{\frac{2}{n}} 
\nonumber\\
&+C(n,\kappa) \left[  \frac{\vol(\Om)}{\vol(\Gamma)}+\beta \right],
\end{align}

where the constants $A(n,\kappa)$, $B(n,\kappa)$ and $C(n,\kappa)$ are in the form $c(n)e^{\kappa }$, $c(n)$ being a term involving $n$ and $\kappa$ free.

 \begin{enumerate}[label={}]
    
    \item[-] If $\kappa\leqslant 1$, then $A(n,\kappa)$, $B(n,\kappa)$ and $B'(n,\kappa)$ can be replace by constants depending only on $n$:
    \begin{align*}
\lambda_{W,k}^{\beta}(\Om)
&\leqslant
A(n)\left[\frac{\vol(\Omega)}{\vol(\Gamma)}+\beta\right]\left( \frac{\tilde{C}k}{\vol(\Gamma)}  \right)^{\frac{2}{n-1}}
 \\
&+B(n)
 \left( \frac{\vol(\Om)}{\vol(\Gamma)}  \right)^{1-\frac{2}{n}}
 \left( \frac{k}{\vol(\Gamma)}  \right)^{\frac{2}{n}} 
\\
&+C(n) \left(  \frac{\vol(\Om)}{\vol(\Gamma)}+\beta \right).
\end{align*}
       
    \item[-] Otherwise, we assume that  $\ric (M,g) \geqslant -(n-1)\kappa^2 g$ with $\kappa> 1$. Then the Ricci curvature $\ric (M,\tilde{g})$ of the rescaled metric $\tilde{g}:=\kappa^2 g$ is bounded by $-(n-1)\tilde{g}$. We mark with a tilde quantities associated with the metric ${\tilde {g}}$, while those unmarked with such will be still associated with the metric $g$. Then we have
    
        \begin{align*}
\tilde{\lambda}_{W,k}^{\beta}(\Om)
&\leqslant
A(n)\left[\frac{\vol(\Omega)}{\vol(\Gamma)}+\beta\right]\left( \frac{\tilde{C}k}{\mathrm{Vol}_{\tilde{g}}(\Gamma)}  \right)^{\frac{2}{n-1}}
 \\
&+B(n)
 \left( \frac{\mathrm{Vol}_{\tilde{g}}(\Om)}{\mathrm{Vol}_{\tilde{g}}(\Gamma)}  \right)^{1-\frac{2}{n}}
 \left( \frac{k}{\mathrm{Vol}_{\tilde{g}}(\Gamma)}  \right)^{\frac{2}{n}} 
\\
&+C(n) \left(  \frac{\mathrm{Vol}_{\tilde{g}}(\Om)}{\mathrm{Vol}_{\tilde{g}}(\Gamma)}+\beta \right),
\end{align*}
In addition, since $\kappa>1$, for all  $u\in \mathfrak{V}_\beta\backslash\{0\}$ we have \begin{equation*}
\tilde{R}_\beta(u)=\frac{\kappa\int_\Om{|\nabla u|^2 \mathrm{d}_M+\beta\int_{\Gamma}{|\nabla_\Gamma u|^2 \mathrm{d}_\Gamma}}}{\kappa^2\int_{\Gamma}{u^2 \mathrm{d}_\Gamma}}\geqslant\frac{1}{\kappa^2} {R}_\beta(u).
\end{equation*}
Every orthonormal basis of  a $ k$-dimensional  subspaces $V\in\mathfrak{V}(k) $ of   $ \mathfrak{V}_\beta$ remains orthogonal with the metric $\tilde{g}$, then using the variation characterisation, we have
\begin{align*}
\lambda_{W,k}^{\beta}(\Om)\leqslant\kappa^2\tilde{\lambda}_{W,k}^{\beta}(\Om)
&\leqslant
A(n)\left[\frac{{\mathrm{Vol}_{\tilde{g}}(\Omega)}}{{\mathrm{Vol}_{\tilde{g}}(\Gamma)}}+\beta\right]\kappa^2\left( \frac{\tilde{C}k}{\mathrm{Vol}_{\tilde{g}}(\Gamma)}  \right)^{\frac{2}{n-1}}
 \\
&+B(n)\kappa^2
 \left( \frac{\mathrm{Vol}_{\tilde{g}}(\Om)}{\mathrm{Vol}_{\tilde{g}}(\Gamma)}  \right)^{1-\frac{2}{n}}
 \left( \frac{k}{\mathrm{Vol}_{\tilde{g}}(\Gamma)}  \right)^{\frac{2}{n}} 
\\
&+C(n) \kappa^2\left(  \frac{\mathrm{Vol}_{\tilde{g}}(\Om)}{\mathrm{Vol}_{\tilde{g}}(\Gamma)}+\beta \right).
\end{align*}
However $\mathrm{Vol}_{\tilde{g}}(\Om)=\kappa^{n}\vol(\Om)$ and $\mathrm{Vol}_{\tilde{g}}(\Gamma)=\kappa^{n-1}\vol(\Gamma)$, thus
    \begin{align*}
\lambda_{W,k}^{\beta}(\Om)
&\leqslant
A(n)\left[\kappa\frac{\vol(\Omega)}{\vol(\Gamma)}+\beta\right]\left( \frac{\tilde{C}k}{\vol(\Gamma)}  \right)^{\frac{2}{n-1}}
 \nonumber\\
&+B(n)\kappa
 \left( \frac{\vol(\Om)}{\vol(\Gamma)}  \right)^{1-\frac{2}{n}}
 \left( \frac{k}{\vol(\Gamma)}  \right)^{\frac{2}{n}} 
\nonumber\\
&+C(n) \kappa^2\left(  \kappa\frac{\vol(\Om)}{\vol(\Gamma)}+\beta \right)\nonumber\\
&\leqslant
A(n)\left[\kappa\frac{\vol(\Omega)}{\vol(\Gamma)}+\beta\right]\left( \frac{\tilde{C}k}{\vol(\Gamma)}  \right)^{\frac{2}{n-1}}
 \nonumber\\
&+B(n)\kappa
 \left( \frac{\vol(\Om)}{\vol(\Gamma)}  \right)^{1-\frac{2}{n}}
 \left( \frac{k}{\vol(\Gamma)}  \right)^{\frac{2}{n}} 
\nonumber\\
&+C(n) \kappa^3\left(  \frac{\vol(\Om)}{\vol(\Gamma)}+\beta \right).
\end{align*}
 \end{enumerate}
 In each case,
  \begin{align}
\lambda_{W,k}^{\beta}(\Om)
&\leqslant
A(n)\left[\kappa\frac{\vol(\Omega)}{\vol(\Gamma)}+\beta\right]\left( \frac{\tilde{C}k}{\vol(\Gamma)}  \right)^{\frac{2}{n-1}}
 \nonumber\\
&+B(n)\big(\kappa+1\big)
 \left( \frac{\vol(\Om)}{\vol(\Gamma)}  \right)^{1-\frac{2}{n}}
 \left( \frac{k}{\vol(\Gamma)}  \right)^{\frac{2}{n}} 
\nonumber\\
&+C(n) \big(\kappa^3+1\big)\left(  \frac{\vol(\Om)}{\vol(\Gamma)}+\beta \right).\label{eq07042020}
\end{align}
The result follows setting $B(n,\kappa):=B(n)\big(\kappa+1\big)$ and $C(n,\kappa):=C(n)\big(\kappa^3+1\big)$.
 \end{proof}

 \begin{proof}[Proof of Corollary \ref{cor070420202}]
We rewrite the second term in the right hand side of \eqref{eq07042020} that we refer as $T_2$:
 $$T_2=\frac{\overline{B}}{k^{\frac{2}{n(n-1)}}} \left( \frac{k}{\vol(\Gamma)}  \right)^{\frac{2}{n-1}}$$
 where
$ \overline{B}:=
 B(n,\kappa)
 \left( \frac{\vol(\Om)}{\vol(\Gamma)}  \right)^{1-\frac{2}{n}}
\vol(\Gamma)^{\frac{2}{n(n-1)}}$.
\begin{enumerate}
\item If $k\leqslant \overline{B}^{\frac{n(n-1)}{2}}$ then $T_2$ is bounded from above by
$$\frac{\overline{B}^{1+\frac{2}{n}}}{\vol(\Gamma)^{\frac{2}{n-1}}}
= B(n,\kappa)^{1+\frac{2}{n}}\frac{\vol(\Om)^{1-\frac{4}{n^2}}}{\vol(\Gamma)^{1-\frac{2}{n(n-1)}+\frac{2}{n+1}-\frac{4}{n^2(n-1)}-\frac{4}{n^2}}}
=:\overline{\overline{B}},$$
which is a geometric constant free from $k$. 

\item
Otherwise, we have $\frac{\overline{B}}{k^{\frac{2}{n(n-1)}}}<1$ and then
$$T_2<\left( \frac{k}{\vol(\Gamma)}  \right)^{\frac{2}{n-1}}.$$
\end{enumerate}
In each case, we have
$$T_2\leqslant \left( \frac{k}{\vol(\Gamma)}  \right)^{\frac{2}{n-1}}+ \overline{\overline{B}}.$$
Hence replacing in \eqref{eq07042020} and setting $$B(\Omega,\beta):=\overline{\overline{B}}+C(n,\kappa)\left(  \frac{\vol(\Om)}{\vol(\Gamma)}+\beta \right),$$ we get 
$$
\lambda_{W,k}^{\beta}(\Om)
\leqslant
\left( A(n)\left[\kappa\frac{\vol(\Omega)}{\vol(\Gamma)}+\beta\right]\tilde{C}^{\frac{2}{n-1}}+1\right)\left( \frac{k}{\vol(\Gamma)}  \right)^{\frac{2}{n-1}}+B(\Omega,\beta).
 $$
\end{proof}

The following lemma, from \cite[Lemma 3.2]{Nardulli2018} gives a volume estimate result which will be very useful to achieve our estimate in Theorem \ref{main29}.

 \begin{lem}[Nardulli, 2018]\label{Lnard}
 Let $(M,g)$ be a Riemannian manifold of dimension $n\geqslant 2$ and $\Gamma\subset M$ a smooth hypersurface.
Then there exist two constants $R_0 > 0$ and $C_0 > 0$ such that for every $x\in M$  at distance $d$ from $\Gamma$, one has 
\begin{equation}
\vol_g(\Gamma\cap B(x,R))\leqslant(1+2RC_0)\omega_{n-1}(2R)^{n-1},\quad\forall~R\in[d, R_0),
\end{equation}
where $\omega_{n-1}$ is the volume of the unit ball of $\R^{n-1}$.
Here $R_0$ is a constant depending on geometric data of $\Gamma$ and the ambient Riemannian manifold $M$ including a bound on the second fundamental form, normal injectivity radius of $\Gamma$ and injectivity radius of $M$. As well, the constant $C_0$ depends on the same quantities but also on a lower bound on the Ricci curvature of $\Gamma$.
 \end{lem}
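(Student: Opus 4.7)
The plan is to reduce the bound to one for balls centered on $\Gamma$ and then combine an intrinsic-extrinsic distance comparison with a Bishop-Gromov style volume estimate applied intrinsically to $\Gamma$. First, given $x\in M$ at distance $d$ from $\Gamma$, pick a nearest point $y\in\Gamma$, so $d(x,y)=d$. For any $z\in\Gamma\cap B(x,R)$ the triangle inequality yields $d(y,z)\leqslant d+R\leqslant 2R$ since $R\geqslant d$. Hence $\Gamma\cap B(x,R)\subset\Gamma\cap B(y,2R)$, and the task reduces to estimating $\vol_g\bigl(\Gamma\cap B(y,\rho)\bigr)$ with $\rho=2R$ for a base point $y$ lying on $\Gamma$.

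Next, I would compare $\Gamma\cap B(y,\rho)$ with an intrinsic ball $B_\Gamma(y,\rho')$ in $(\Gamma,g|_\Gamma)$. Working in a Fermi chart about $y$, a standard second-order computation gives $d_\Gamma(y,z)\leqslant d_M(y,z)\bigl(1+c\,\|A\|_\infty\,d_M(y,z)\bigr)$ for $z\in\Gamma$ close enough to $y$, where $\|A\|_\infty$ is a uniform bound on the second fundamental form of $\Gamma$ and $c$ is dimensional. This comparison is valid on scales $\rho<R_0$, with $R_0$ controlled by the normal injectivity radius of $\Gamma$, the injectivity radius of $M$, and $\|A\|_\infty$; it yields the inclusion $\Gamma\cap B(y,\rho)\subset B_\Gamma\bigl(y,\rho(1+c\,\|A\|_\infty\,\rho)\bigr)$. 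Via the Gauss equation, the lower Ricci bound $\ric(M)\geqslant -(n-1)\kappa^2$ combined with $\|A\|_\infty$ induces a lower Ricci bound on $\Gamma$, and Bishop-Gromov applied intrinsically to $(\Gamma,g|_\Gamma)$ gives $\vol_g\bigl(B_\Gamma(y,s)\bigr)\leqslant \omega_{n-1}s^{n-1}(1+c\,s)$ for $s$ small. Substituting $s=\rho(1+c\,\|A\|_\infty\,\rho)$ with $\rho=2R$ and absorbing the resulting linear-in-$R$ correction into a single constant $C_0$ produces the announced bound $(1+2RC_0)\,\omega_{n-1}(2R)^{n-1}$.

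The main obstacle is the quantitative intrinsic-extrinsic comparison on $\Gamma$: one has to choose $R_0$ small enough that the normal exponential map of $\Gamma$ is a diffeomorphism on a tube of radius $R_0$ around $\Gamma$ and that $\Gamma$-geodesics of length at most $2R_0$ remain inside this tube. This is precisely why $R_0$ depends on the normal injectivity radius of $\Gamma$, the injectivity radius of $M$ and a bound on the second fundamental form, while $C_0$ absorbs in addition the effective lower Ricci bound on $\Gamma$ that enters the Bishop-Gromov step. Since the statement is explicitly imported from Lemma 3.2 of Nardulli's paper, I would invoke that reference for the complete quantitative verification rather than repeat the calculation here.
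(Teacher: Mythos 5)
Your proposal follows essentially the same route as the paper, which itself only sketches the argument and defers to Nardulli's Lemma 3.2: reduce via the triangle inequality to a ball of radius $d+R\leqslant 2R$ centered at a nearest point of $\Gamma$, compare extrinsic and intrinsic distances on $\Gamma$ through the distortion of the normal exponential map, and conclude with Bishop--Gromov applied to intrinsic balls of $\Gamma$. Since you, like the author, ultimately invoke the cited reference for the quantitative details, the two arguments coincide.
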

 \begin{rem}
 In the statement of \cite[Lemma 3.2]{Nardulli2018} the inequality holds for every $x\in M$ with $\dist(\Gamma,x)<R_0$ and $R<R_0$. One can consider $R \in[\dist(\Gamma,x), R)$ since in the case $\dist(\Gamma,x)>R$, the intersection $\Gamma\cap B(x,r)$ is empty and the inequality is trivial. Assuming that $\dist(\Gamma,x)\leqslant R < R_0$, we have in the right hand side  $\dist(\Gamma,x)+ R\leqslant 2R$ which leads to our statement.
 
The proof in \cite{Nardulli2018} reduces the problem to an application of  Bishop-Gromov inequality  estimating the volume of an intrinsic ball of $\Gamma$. This is done by using comparison theorems for distortion of the normal exponential map based on a submanifold, to compare the extrinsic and intrinsic  distance functions on $\Gamma$.
\end{rem}

\begin{proof}[Proof of Theorem \ref{main29}]
 The proof follows along the same lines as the proofs of Proposition \ref{prop020432020} with only slight modifications.
We consider the  metric measure space $(M,d, \mu)$, where  $d$ is the distance from the metric $g$ and $\mu$ is the Borel measure with support $\Gamma$ defined for each Borelian $A$ of  $M$ by
$$\mu(A):=\int_{A\cap\Gamma} \mathrm{dv}_g.$$

It satisfies the assumptions of Lemma \ref{lem25}  as we have already seen in the proof of  Proposition \ref{prop020432020}.  
.

We set $K:=4k$ and $r_0:=\frac{1}{10}\min\{1,R_0\}$ where $R_0$ is the same constant as in Lemma \ref{Lnard}.  Applying Lemma \ref{lem25}, there exists in $M$ either a family $\mathfrak{B}=\{(A_j,B_j)\}_{j=1}^{4k}$ of spherical capacitors such that
\begin{itemize}
\item $A_j=B(x_j,r_j)$,~ $x_j\in X$,~ $r_j\in(0,2r_0]$,~ $\mu(A_j)\geqslant \alpha=\frac{\vol(\Gamma)}{16kN^2}$,
 \item $B_j=B(x_j,2r_j)$,
\end{itemize}
or a family $\mathfrak{C}=\{(A_j,A_j^{r_0})\}_{j=1}^{4k}$ of $4k$ CM-capacitors such that  $\mu(A_j)\geqslant 2N\alpha$.
\paragraph*{First case $M\supset \mathfrak{B}$.} 
 This first part of the proof is exactly the same as the proof of Theorem \ref{main21032020} until the point \eqref{aiii25032020} below.
For each $1\leqslant j \leqslant 4k$, we consider the function $f_j$ supported
in $B_j=B(x_j,2r_j)\in \mathfrak{B}$ and defined by 
$$f_j(x):=
\begin{cases}
\min \{1,2-\frac{d(x_j,x)}{r_j}\}\quad &\forall x\in B_j,\\
0\quad &\forall x\in M\backslash B_j.
\end{cases}
$$
One sees that
$$ R_\beta(f_j)\leqslant\frac{\int_{\Om\cap B_j}|\nabla f_j|^2 \mathrm{d}_M+\beta\int_{\Gamma\cap B_j}{|\nabla {f_j}|^2 \mathrm{d}_\Gamma}}{\int_{\Gamma\cap A_j}{{f_i}^2 \mathrm{d}_\Gamma}}.$$
\begin{enumerate}[label=\roman*)]
\item \label{ai25032020} Since  for every $x\in A_j$, $f_j(x)=1$, one has
$$\int_{\Gamma\cap A_j}{{f_j}^2 \mathrm{d}_\Gamma}\geqslant\int_{\Gamma\cap A_j}\mathrm{d}_\Gamma\geqslant \mu(A_j)\geqslant \frac{\vol(\Gamma)}{16 N^2 k}. $$
\item\label{aii25032020} Set for $x\in M$, $d_j(x):=d(x_j, x)$, then
$$|\nabla f_j|\leqslant \left|\nabla(2-\frac{d_j(x)}{r_j})\right|=\left|\frac{1}{r_j}\nabla(d_j(x))\right|\leqslant\frac{1}{r_j}.$$ 
 By H\"older's inequality, we have 
 \begin{align*}
 \int_{\Om\cap B_j}|\nabla f_j|^2 \mathrm{d}_M &\leqslant \left(\int_{\Om\cap B_j}|\nabla f_j|^n \mathrm{d}_M \right)^{\frac{2}{n}}\left(\int_{\Om\cap B_j}\mathrm{d}_M \right)^{1-\frac{2}{n}}\\
 &\leqslant \left(\frac{1}{r_j^n}\int_{\Om\cap B_j}1 \mathrm{d}_M \right)^{\frac{2}{n}}\left(\int_{\Om\cap B_j}\mathrm{d}_M \right)^{1-\frac{2}{n}}\\
 &\leqslant \left(\frac{1}{r_j^n}\vol(B_j) \right)^{\frac{2}{n}}\left( \vol(\Om\cap B_j) \right)^{1-\frac{2}{n}}.
 \end{align*}
 However, one has 
 \begin{align*}
  \vol (B_j)&\leqslant \nu(n,-\kappa^2, r_j)\leqslant \frac{2^n}{n}r_j^n e^{2(n-1)r_j\kappa }\\
  &\leqslant \frac{2^n}{n}r_j^n e^{2(n-1)\kappa }=:c(n,\kappa) r_j^n.
 \end{align*}
In addition, the $B_j$'s are pairwise disjoint then 
$$\sum_{j=1}^{4k}\vol(\Om\cap B_j)\leqslant \vol(\Om).$$
 We deduce that at least $2k$ of $B_j$'s satisfy
\begin{equation}
\vol(\Om\cap B_j)\leqslant\frac{\vol(\Om)}{k}.\label{alk250320201}
\end{equation} 
Up to re-ordering, we assume that the first $2k$ of the $B_j$'s satisfy \eqref{alk250320201}. Hence, 
$$ \int_{\Om\cap B_j}|\nabla f_j|^2 \leqslant c(n,\kappa)^{\frac{2}{n}}\left(\frac{\vol(\Om)}{k} \right)^{1-\frac{2}{n}},\quad \forall~ 1\leqslant j\leqslant 2k.$$
\item \label{aiii25032020}
Notice that  $d(x_j,\Gamma)\leqslant 2r_j\leqslant 4 r_0<R_0$. Applying Lemma \ref{Lnard} with $R=2r_j$, we have
\begin{equation*}
 \vol(\Gamma\cap B(x_j,2r_j))\leqslant(1+4r_jC_0)\omega_{n-1}(4r_j)^{n-1}.
\end{equation*}
Either $1\geqslant 4r_jC_0$ and then one has
 \begin{align*}
 \int_{\Gamma\cap B_j}|\nabla f_j|^2 \mathrm{d}_M &\leqslant \left(\int_{\Gamma\cap B_j}|\nabla f_j|^{n-1} \mathrm{d}_M \right)^{\frac{2}{n-1}}\left(\int_{\Gamma\cap B_j}\mathrm{d}_M \right)^{1-\frac{2}{n-1}}\\
 &\leqslant \left(\frac{1}{r_j^{n-1}}\int_{\Gamma\cap B_j} \mathrm{d}_M \right)^{\frac{2}{n-1}}\left(\int_{\Gamma\cap B_j}\mathrm{d}_M \right)^{1-\frac{2}{n-1}}\\
 &\leqslant \left(\frac{1}{r_j^{n-1}}\vol(\Gamma\cap B(x_j,2r_j)) \right)^{\frac{2}{n-1}}\left( \vol(\Gamma\cap B_j) \right)^{1-\frac{2}{n-1}}\\
 &\leqslant \left(2^{2n-1}\omega_{n-1} \right)^{\frac{2}{n-1}}\left( \vol(\Gamma\cap B_j) \right)^{1-\frac{2}{n-1}}.
 \end{align*}
Or, $1\leqslant 4r_jC_0$ and then
 \begin{align*}
 \int_{\Gamma\cap B_j}|\nabla f_j|^2 \mathrm{d}_M &\leqslant \left(\int_{\Gamma\cap B_j}|\nabla f_j|^{n} \mathrm{d}_M \right)^{\frac{2}{n}}\left(\int_{\Gamma\cap B_j}\mathrm{d}_M \right)^{1-\frac{2}{n}}\\
 &\leqslant \left(\frac{1}{r_j^{n}}\int_{\Gamma\cap B_j}1 \mathrm{d}_M \right)^{\frac{2}{n}}\left(\int_{\Gamma\cap B_j}\mathrm{d}_M \right)^{1-\frac{2}{n}}\\
 &\leqslant \left(\frac{1}{r_j^{n}}\vol(\Gamma\cap B(x_j,2r_j)) \right)^{\frac{2}{n}}\left( \vol(\Gamma\cap B_j) \right)^{1-\frac{2}{n}}\\
 &\leqslant \left(2^{3+2(n-1)}C_0\omega_{n-1} \right)^{\frac{2}{n}}\left( \vol(\Gamma\cap B_j) \right)^{1-\frac{2}{n}}.
 \end{align*}
In each case, 
 \begin{align*}
 \int_{\Gamma\cap B_j}|\nabla f_j|^2 \mathrm{d}_M 
 \leqslant & \left(2^{2n-1}\omega_{n-1} \right)^{\frac{2}{n-1}}\left( \vol(\Gamma\cap B_j) \right)^{1-\frac{2}{n-1}}\\
 &+ \left(2^{3+2(n-1)}C_0\omega_{n-1} \right)^{\frac{2}{n}}\left( \vol(\Gamma\cap B_j) \right)^{1-\frac{2}{n}}.
 \end{align*}
In addition, again the $B_j$'s are pairwise disjoint then 
$$\sum_{j=1}^{4k}\vol(\Gamma\cap B_j)\leqslant \vol(\Gamma).$$
 Hence at least $k$ of $B_j$'s satisfy
\begin{equation}\label{alk250320202}
\vol(\Gamma\cap B_j)\leqslant\frac{\vol(\Gamma)}{k}.
\end{equation} 
Up to re-ordering, we assume that for the first $k$ of the $B_j$'s inequality \eqref{alk250320202} holds.
Hence, 
 \begin{align*}
 \int_{\Gamma\cap B_j}|\nabla f_j|^2 \mathrm{d}_M 
 \leqslant & \left(2^{2n-1}\omega_{n-1} \right)^{\frac{2}{n-1}}\left( \frac{\vol(\Gamma)}{k}  \right)^{1-\frac{2}{n-1}}\\
 &+ \left(2^{3+2(n-1)}C_0\omega_{n-1} \right)^{\frac{2}{n}}\left( \frac{\vol(\Gamma)}{k} \right)^{1-\frac{2}{n}}.
 \end{align*}
\end{enumerate}
Combining \ref{ai25032020}, \ref{aii25032020}, \ref{aiii25032020}, one has

\begin{align*}
 R_\beta(f_j)\leqslant & \frac{16 N^2 k}{\vol(\Gamma)}
\Big[
 c(n,\kappa)^{\frac{2}{n}}\left(\frac{\vol(\Om)}{k} \right)^{1-\frac{2}{n}}\\
&+\beta\left(2^{2n-1}\omega_{n-1} \right)^{\frac{2}{n-1}}\left( \frac{\vol(\Gamma)}{k}  \right)^{1-\frac{2}{n-1}}\\
 &+\beta\left(2^{3+2(n-1)}C_0\omega_{n-1} \right)^{\frac{2}{n}}\left( \frac{\vol(\Gamma)}{k} \right)^{1-\frac{2}{n}}
 \Big].
 \end{align*}
 Thus,
 \begin{align}
 R_\beta(f_j)\leqslant &  A(n,\kappa)\beta  \left( \frac{k}{\vol(\Gamma)}  \right)^{\frac{2}{n-1}} \nonumber \\
& + B(n,\kappa,C_0)\left[\frac{\vol(\Omega)}{\vol(\Gamma)} +\beta\right]   \left( \frac{k}{\vol(\Gamma)}  \right)^{\frac{2}{n}},\label{estim250320201}
\end{align}
where
\begin{align*}
A(n,\kappa)&:=2^{28n}\omega_{n-1}^{\frac{2}{n-1}}e^{8(n-1)\kappa }\\
(n,\kappa,C_0)&:=2^{24n}(C_0\omega_{n-1})^{\frac{2}{n}}e^{8(n-1)\kappa }+2^{24n}e^{12(n-1)\kappa }.
\end{align*}

\paragraph*{Second case  $M\supset \mathfrak{C}$.}
For each $1\leqslant j \leqslant 4k$, we consider the function $\varphi_j$ supported
on $A_j^{\tilde{r}_0}$ defined by 
\begin{equation*}
\varphi_j(x):=
\begin{cases}
1-\frac{d(A_j,x)}{r_j} \quad &\forall x\in A_j^{\tilde{r}_0},\\
0\quad &\forall x\in M\backslash A_j^{\tilde{r}_0}.
\end{cases}
\end{equation*}
We have
$$ R_\beta(\varphi_j)\leqslant\frac{\int_{\Om\cap A_j^{\tilde{r}_0}}|\nabla \varphi_j|^2 \mathrm{d}_M+\beta\int_{\Gamma\cap A_j^{\tilde{r}_0}}{|\nabla {\varphi_j}|^2 \mathrm{d}_\Gamma}}{\int_{\Gamma\cap A_j}{{\varphi_i}^2 \mathrm{d}_\Gamma}}.$$
\begin{enumerate}[label=\roman*)]
\item \label{bi25032020} Since  for every $x\in A_j$, $\varphi_j(x)=1$, one has
$$\int_{\Gamma\cap A_j}{{\varphi_j}^2 \mathrm{d}_\Gamma}\geqslant\int_{\Gamma\cap A_j}\mathrm{d}_\Gamma\geqslant \mu(A_j)\geqslant \frac{\vol(\Gamma)}{8N k} $$
\item\label{bii25032020} We have
$$
 \int_{\Om\cap A_j^{\tilde{r}_0}}|\nabla \varphi_j|^2 \mathrm{d}_M \leqslant \frac{1}{\tilde{r}_0^2}\vol(\Om\cap A_j^{\tilde{r}_0}).
$$
The $A_j^{\tilde{r}_0}$'s are pairwise disjoint then $\sum_{j=1}^{4k}\vol(\Om\cap A_j^{\tilde{r}_0})\leqslant \vol(\Om)$.\\
 We deduce that at least $2k$ of $A_j^{\tilde{r}_0}$'s satisfies 
\begin{equation}
\vol(\Om\cap A_j^{\tilde{r}_0})\leqslant\frac{\vol(\Om)}{k}.\label{blk250320201}
\end{equation} 
Up to re-ordering, we assume that for the first $2k$ of the $A_j^{\tilde{r}_0}$'s we have \eqref{blk250320201}.
Hence, 
$$ \int_{\Om\cap A_j^{\tilde{r}_0}}|\nabla \varphi_j|^2 \leqslant\frac{1}{\tilde{r}_0^2} \frac{\vol(\Om)}{k},\quad \forall~ 1\leqslant j\leqslant 2k.$$
\item \label{biii25032020}
By the same argument, at least $k$ of $A_j^{\tilde{r}_0}$'s satisfy
\begin{equation}
\vol(\Gamma\cap A_j^{\tilde{r}_0})\leqslant\frac{\vol(\Gamma)}{k}.\label{blk250320202}
\end{equation} 
Up to re-ordering, we assume that for the first $k$ of the $A_j^{\tilde{r}_0}$'s inequality \eqref{blk250320202} holds.
Hence, 
$$ \int_{\Gamma\cap A_j^{\tilde{r}_0}}|\nabla \varphi_j|^2\leqslant \frac{1}{\tilde{r}_0^2} \frac{\vol(\Gamma)}{k},\quad \forall~ 1\leqslant j\leqslant k.$$
\end{enumerate}
Combining \ref{bi25032020}, \ref{bii25032020}, \ref{biii25032020}, one has
\begin{align}
 R_\beta(\varphi_j)&\leqslant  \frac{8N k}{\vol(\Gamma)}\left[ \frac{1}{\tilde{r}_0^2} \frac{\vol(\Om)}{k}+\beta \frac{1}{\tilde{r}_0^2} \frac{\vol(\Gamma)}{k}   \right]\nonumber\\
&= \frac{2N}{\tilde{r}_0^2} \left[  \frac{\vol(\Om)}{\vol(\Gamma)}+\beta \right].\label{eq26042020}
\end{align}
Hence, if $\tilde{r}_0=r_0$, then
\begin{equation}
 R_\beta(\varphi_j)= C \left[  \frac{\vol(\Om)}{\vol(\Gamma)}+\beta \right],
\end{equation}
where $C=C(n,\kappa,R_0):=\frac{2^{10n}e^{4(n-1)\kappa } }{r_0^2} $ and $r_0:=\frac{1}{10}\min\{1,R_0\}$.\\
Otherwise, $\tilde{r}_0=\tau_1<r_0$ and there exist $x\in X$ such that $\mu(B(x,2\tau_1))>\alpha=\frac{\vol(\Gamma)}{16N^2k}$. Using Lemma \ref{Lnard}, we have
$$\left( 1+4\tilde{r}_0 C_0\right)\omega_{n-1}(4\tilde{r}_0)^{n-1}>\alpha.$$
Either  $4 \tau_1 C_0<1$  then  $\frac{1}{\tilde{r}_0}\leqslant a'(n,\kappa)\left(\frac{k}{\vol(\Gamma)} \right)^{\frac{2}{n-1}}$
where $a'(n,\kappa)$ depends only on $n$ and $\kappa$. Or, $4 \tau_1 C_0\geqslant 1$ and  $\frac{1}{\tilde{r}_0}\leqslant b'(n,\kappa,C_0)\left(\frac{k}{\vol(\Gamma)} \right)^{\frac{2}{n}}$
where $b'(n,\kappa,C_0)$ depends  on $n$, $\kappa$ and $C_0$. Replacing in\eqref{eq26042020}, these partial results are combined by a global upper bound after summation
\begin{multline}
 R_\beta(\varphi_j)\leqslant  A'(n,\kappa) \left[  \frac{\vol(\Om)}{\vol(\Gamma)}+\beta \right]\left(\frac{k}{\vol(\Gamma)} \right)^{\frac{2}{n-1}}\\
+B'(n,\kappa,C_0) \left[  \frac{\vol(\Om)}{\vol(\Gamma)}+\beta \right]\left(\frac{k}{\vol(\Gamma)} \right)^{\frac{2}{n}} \\
+ C(n,\kappa,R_0) \left[  \frac{\vol(\Om)}{\vol(\Gamma)}+\beta \right],\label{estim250320202}
\end{multline}
where $A'(n,\kappa)$, $B'(n,\kappa,C_0)$ and $C(n,\kappa,R_0)$ depend on the respective terms in parentheses.

We can assume that $A(n,\kappa)\geqslant A'(n,\kappa)$ and $B(n,\kappa,C_0)\geqslant B'(n,\kappa,C_0)$. In both cases $ R_\beta(\varphi_j)$ is bounded from above by the sum of the right-hand
sides in \eqref{estim250320201} and \eqref{estim250320202}. One concludes the argument by applying the min-max characterization of $\lambda_{W,k}^{\beta}(\Om)$:
\begin{align}
\lambda_{W,k}^{\beta}(\Om)
\leqslant  A(n,\kappa) \left[  \frac{\vol(\Om)}{\vol(\Gamma)}+\beta \right]\left(\frac{k}{\vol(\Gamma)} \right)^{\frac{2}{n-1}}\\
+B(n,\kappa,C_0) \left[ \left( \frac{\vol(\Om)}{\vol(\Gamma)}\right)^{1-\frac{2}{n}} +\frac{\vol(\Om)}{\vol(\Gamma)}+\beta \right]\left(\frac{k}{\vol(\Gamma)} \right)^{\frac{2}{n}} \\
+ C(n,\kappa,R_0) \left[  \frac{\vol(\Om)}{\vol(\Gamma)}+\beta \right],
\end{align}
where  the constants $B(n,\kappa,C_0)$ and $C(n,\kappa,R_0)$ depend on $n$ and geometric quantities $\kappa$, $C_0$ and $R_0$ respectively.

With the same arguments as in the prof of Theorem \ref{main21032020}, we have 
\begin{multline}
\lambda_{W,k}^{\beta}(\Om)
\leqslant
   A(n) \left[  \kappa\frac{\vol(\Om)}{\vol(\Gamma)}+\beta \right]\left(\frac{k}{\vol(\Gamma)} \right)^{\frac{2}{n-1}}\\
+\overline{B}(n,\kappa,C_0) \left[ \left( \frac{\vol(\Om)}{\vol(\Gamma)}\right)^{1-\frac{2}{n}} +\frac{\vol(\Om)}{\vol(\Gamma)}+\beta \right]\left(\frac{k}{\vol(\Gamma)} \right)^{\frac{2}{n}} \\
+ \overline{C}(n,\kappa,R_0) \left[  \frac{\vol(\Om)}{\vol(\Gamma)}+\beta \right],
\end{multline}
tant $A(n)$ depends only on the dimension $n$, the constants $\overline{B}(n,\kappa,C_0)$ and $\overline{C}(n,\kappa,R_0)$ depend on $n$ and geometric quantities $\kappa$, $C_0$ and $R_0$ respectively.
\end{proof}
\begin{proof}[Proof of Corollary \ref{cor07042020}]
The proof of Corollary \ref{cor07042020} is similar to the proof of Corollary \ref{cor070420202}
\end{proof}
 \bibliographystyle{plain}

\end{document}